\def\cl{\centerline}
\def\vs{\vspace*}
\def\W{\mathcal{W}}
\def\V{\mathcal{V}}
\def\G{\mathcal{G}}
\def\H{\mathcal{H}}
\def\D{\mathfrak{D}}
\def\Z{\mathbb{Z}}
\def\g{\mathfrak{g}}
\def\N{\mathbb{N}}
\def\K{\mathcal{K}}
\def\A{\mathcal{A}}
\def\C{\mathbb{C}}
\def\ni{\noindent}
\numberwithin{equation}{section}
\newtheorem{theo}{Theorem}[section]
\newtheorem{defi}[theo]{Definition}
\newtheorem{lemm}[theo]{Lemma}
\newtheorem{prop}[theo]{Proposition}
\newtheorem{clai}{Claim}
\newtheorem{remark}[theo]{Remark}
\begin{document}
\begin{center}
\cl{\large\bf \vs{6pt} Harish-Chandra modules  over the high rank}
\cl{\large\bf $W$-algebra  $W(2,2)$}
\cl{ Haibo Chen}
\end{center}

{\small
\parskip .005 truein
\baselineskip 3pt \lineskip 3pt

\noindent{{\bf Abstract:}
In this paper, using the theory of $\A$-cover  developed in \cite{B1,BF1}, we    completely   classify  all simple Harish-Chandra modules over the high rank $W$-algebra $W(2,2)$.  As a byproduct, we   obtain the classification of   simple Harish-Chandra
modules over the  classical $W$-algebra $W(2,2)$ studied in \cite{LGZ,CLW,GLZ1}.
\vs{5pt}

\ni{\bf Key words:}
 high rank
$W$-algebra $W(2,2)$,   Harish-Chandra  module, weight module.}

\ni{\it Mathematics Subject Classification (2020):} 17B10, 17B65,  17B68.}
\parskip .001 truein\baselineskip 6pt \lineskip 6pt
\section{Introduction}
In the representation theory of infinite-dimensional Lie algebras, there is  a  very important class of weight modules  called Harish-Chandra  modules (namely, weight modules with finite-dimensional weight spaces).  The classification of simple Harish-Chandra modules over the Virasoro algebra (also called $N=0$ superconformal algebra),  conjectured  by  Kac (see \cite{K}), was given in \cite{M2}.
Combined \cite{MP} with \cite{S0},  a new method was presented  to   obtain   this classification.
After that, a lot  of general versions of the Virasoro algebra
have been investigated by some authors. Those include, but are not limited to,
  the generalized Virasoro algebra (see, e.g., \cite{LZ2,Ma3,S1,S2,S,SZ,PS,GLZ}), the
 generalized Heisenberg-Virasoro algebra (see \cite{GLZ0,LG,LZ1}), the $W$-algebra $W(2,2)$ (see \cite{LGZ,CLW,GLZ1}),  the loop-Virasoro algebra (see \cite{GLZ1}),
 and
so on.
To   classify  all simple Harish-Chandra modules over the
Lie algebra  $W_n$ of vector fields on  $n$-dimensional torus, Billig and Futorny developed a new technique called $\A$-cover theory in \cite{B1,BF1}. The  result gained here  was a generalized version of   Mathieu's classification theorem for the Virasoro algebra. From then on, the $\A$-cover theory was used in some other Lie (super)algebras (see, e.g., \cite{BF2,CLW,XL,BFIK}).

  The $W$-algebra $W(2,2)$  was introduced  in \cite{ZD} by Zhang and Dong for  investigated the classification of simple vertex
operator algebras generated by   two weight $2$ vectors.
 The centerless $W$-algebra $W(2,2)$  $\overline{\W}[\Z]$  can be  obtained  from the point of view of  non-relativistic analogues of the  conformal field theory. By using the ``non-relativistic limit''   on a pair of commuting
algebras  $\mathfrak{vect}(S^1)\oplus \mathfrak{vect}(S^1)$  (see \cite{RU}) via a group contraction, one has the following generators
\begin{eqnarray*}
&&L_m=-t^{m+1}\frac{d}{dt}-(m+1)t^my\frac{d}{dy}-(m+1)\sigma t^m-m(m+1)\eta t^{m-1}y,
\\&&W_m=-t^{m+1}\frac{d}{dy}-(m+1)\eta t^m,
\end{eqnarray*}
where $m\in\Z$,     $\sigma$  and $\eta$ are respectively the scaling dimension  and a  free parameter.
Then {\em the centerless $W$-algebra $W(2,2)$}   is  a Lie algebra with the basis $\{L_m,W_m\mid m\in\Z\}$ and the non-vanishing commutators as follows $$[L_m,L_{m^\prime}]=(m^\prime-m)L_{m+m^\prime},\ [L_m,W_{m^\prime}]=(m^\prime-m)W_{m+m^\prime}$$
 for $m,m^\prime\in\Z$.
  It  is  an infinite-dimensional extension
of an algebra called either non-relativistic  or   conformal Galilei algebra $\mathrm{CGA}(1)\cong\langle L_{\pm1,0},W_{\pm1,0}\rangle$ (see \cite{G}).
Basically, the relationship with conformal algebras makes  it widely studied
 in string theory (see \cite{BS}).
Furthermore, $\overline{\W}[\Z]$ can be realized by the semidirect product of the Witt algebra $\overline{\V}[\Z]$
and the $\overline{\V}[\Z]$-module $A_{0,-1}$ of the intermediate series in \cite{KS}, that is,  $\overline{\W}[\Z]\cong\overline{\V}[\Z]\ltimes A_{0,-1}$. What we concern most is the approach of   realizing $\overline{\W}[\Z]$   from    a   truncated  loop-Witt algebra (see, e.g.,
\cite{GLZ,GL}). The detailed description on it will be shown in Section \ref{115eqw}, which is closely associated with the usage  of $\A$-cover theory. The aim of this paper is to present a completely   classification of simple Harish-Chandra  modules over the
high rank  $W$-algebra $W(2,2)$,  which reobtains the
classification result    of classical $W$-algebra $W(2,2)$ (when $k=1$) studied in \cite{LGZ,CLW,GLZ1}.

The paper is organized as follows. In Section $2$, we   introduce some notations and definitions related to the high rank $W$-algebra $W(2,2)$ and Harish-Chandra modules.  We also recall some known  classification theorems  over  several related Lie algebras for later use.
In Section $3$,
  we   give a classification of  simple cuspidal modules  over
the higher rank $W$-algebra $W(2,2)$  in Theorem \ref{the513}.
In Section $4$,  we present  a classification of simple Harish-Chandra modules over  the higher rank $W$-algebra $W(2,2)$ in Theorem \ref{the622}.

Throughout the present article, we denote by $\C$, $\mathbb{R}$, $\Z$, $\N$ and $\Z_+$  the sets of complex numbers, real numbers, integers, nonnegative integers and positive integers, respectively.
 All vector spaces and Lie algebras are over $\C$.  All simple modules are considered to be non-trivial.
For a Lie algebra $\mathfrak{L}$,  we use $U(\mathfrak{L})$ to denote
the universal enveloping algebra.
\section{Preliminaries}
\subsection{The high rank $W$-algebra $W(2,2)$ and its cuspidal module}\label{sec22}
The  {\em high rank $W$-algebra $W(2,2)$} is an infinite dimensional Lie algebra
$$\W[\Z^k]=\bigoplus_{\alpha\in\Z^k}\C L_\alpha \oplus \bigoplus_{\alpha\in\Z^k}\C W_{\alpha} \oplus\C C,$$
which  satisfies  the following  Lie brackets
\begin{equation}\label{def1.1}
\aligned
&[L_\alpha,L_\beta]= (\beta-\alpha)L_{\alpha+\beta}+\delta_{\alpha+\beta,0}\frac{\alpha^{3}-\alpha}{12}C,\\&
[L_\alpha,W_\beta]=(\beta-\alpha)W_{\alpha+\beta}+\delta_{\alpha+\beta,0}\frac{\alpha^{3}-\alpha}{12}C,
\\&
 [W_\alpha,W_\beta]=[\W[\Z^k],C]=0,
\endaligned
\end{equation}
where $\alpha,\beta\in \Z^k,k\in\Z_+$.
Clearly, $\W[\Z^k]$ has an infinite-dimensional Lie subalgebra   $\V[\Z^k]:=\mathrm{span}\{L_\alpha,C\mid \alpha\in\Z^k\}$,  which is called   {\em  high rank  Virasoro algebra}. Note that   $\C C$ is the center of $\W[\Z^k]$.  The quotient algebras $\overline{\W}[\Z^k]=\W[\Z^k]/\C C$ and $\overline{\V}[\Z^k]=\V[\Z^k]/\C C$ are respectively  called {\em   centerless high rank $W$-algebra $W(2,2)$}
and {\em high rank Witt algebra}.
   When $k=1$, we say that
 $\V[\Z]$  and $\W[\Z]$   are respectively    {\em classical Virasoro algebra} and {\em classical  $W$-algebra $W(2,2)$}.
 For any $\alpha\in\Z^k\setminus\{0\}$, we know that $\V[\Z\alpha]$ and $\W[\Z\alpha]$ are respectively  isomorphic
to the classical Virasoro algebra  and classical $W$-algebra $W(2,2)$.

Now we    recall some definitions related to the weight module.   Consider  a non-trivial module $M$ over $\V[\Z^k]$ or $\W[\Z^k]$. We set that  the action  of the central element  $C$ is a scalar $c$.
The module $M$ is said to be  {\em  trivial}  if the action of whole algebra  on $M$  is trivial.
 Denote
$M_\lambda=\{v\in M \mid L_0v=\lambda v\}$, which is called a {\em weight space} of weight $\lambda\in\Z^k$. We call that $M$ is
  a {\em weight module} if $M=\bigoplus_{\lambda\in\Z^k}M_{\lambda}$. Set
$\mathrm{Supp}(M)=\{\lambda\mid  M_{\lambda}\neq0\}$,
which is called the {\em support} (or called the {\em weight set}) of $M$.
The indecomposable weight module $M$ with all
weight spaces one-dimensional is called  the {\em intermediate series module}.

\begin{defi}\rm
Let $M$ be a weight module over $\W[\Z^k]$.
\begin{itemize}
\item[{\rm (1)}]
If
$\mathrm{dim} (M_\lambda)<+\infty$ for all $\lambda\in\mathrm{Supp}(M)$, then  $M$ is  called {\em Harish-Chandra module}.

\item[{\rm (2)}]
If   there exists some  $K\in\Z_+$ such that $\mathrm{dim}(M_\lambda)<K$ for all $\lambda \in \mathrm{Supp}(M)$, then  $M$ is  called {\em cuspidal} (or {\em uniformly bounded}).
\end{itemize}
\end{defi}
We define   a class of  cuspidal $\W[\Z^k]$-modules as follows, which are exactly   intermediate series modules for $\W[\Z^k]$.
\begin{defi}\rm\label{def22}
 For any $g,h \in\C$, the $\W[\Z^k]$-module  $M(g,h;\Z^k)$   has a basis
$\{v_\beta \mid \beta\in\Z^k\}$ and the   $\W[\Z^k]$-action:
\begin{eqnarray*}
&&L_\alpha v_\beta=(g+\beta+h\alpha)v_{\alpha+\beta},
 \ W_\alpha v_\beta=C v_\beta=0.
\end{eqnarray*}
\end{defi}
It is clear that the modules  $M(g,h;\Z^k)$ are isomorphic to the intermediate series modules of $\V[\Z^k]$.
By \cite{SZ},  we see that the modules $M(g,h;\Z^k)$ are reducible if and only if $g\in\Z^k$ and $h\in\{0,1\}$.
 We use
$\overline{M}(g,h;\Z^k)$ to denote the unique non-trivial simple subquotient of $M(g,h;\Z^k)$. Then
$\mathrm{Supp}(\overline{M}(g,h;\Z^k))=g+\Z^k$ or $\mathrm{Supp}(\overline{M}(g,h;\Z^k))=\Z^k\setminus\{0\}$. We also define $\overline{M}(g,h;\Z^k)$ as
intermediate series modules of $\W[\Z^k]$.

\subsection{Generalized highest weight modules}\label{www9998}
In this section,
a general class of Lie algebras are considered. Assume that $\H=\sum_{\alpha\in\Z^k}\H_\alpha$ is a $\Z^k$-graded Lie algebra
such that $\H_0$ is abelian. And the gradation of $\H$ is the root space decomposition
with respect to $\H_0$.

Let   $\g$ be a subgroup of $\Z^k$ such that $\Z^k=\g\oplus\Z\mu$ for
some $\mu\in\Z^k$. We define the   subalgebra  of $\H$ as follows
$$\H_{\g}=\bigoplus_{\alpha\in \g}\H_\alpha,\ \H_{\g}^+=\bigoplus_{\alpha\in \g,m\in\Z_+}\H_{\alpha+m\mu},
\ \H_{\g}^-=\bigoplus_{\alpha\in \g,m\in\Z_+}\H_{\alpha-m\mu}.$$

Let $\K$ be a simple $\H_{\g}$-module. Then  $\K$ can be extended to an  $(\H_{\g}+\H_{\g}^+)$-module
by defining $\H_{\g}^+\K=0$. Now  we can define the {\em generalized Verma module} $V_{\g,\mu,\K}$
for $\H$ as
$$V_{\g,\mu,\K}=\mathrm{Ind}_{\H_{\g}^++\H_{\g}}^{\H}\K
=U(\H)\bigotimes_{U(\H_{\g}+\H_{\g}^+)}\K.$$

It is easy to know that $V_{\g,\mu,\K}$ has a unique simple quotient module for $\H$ and we write it as $P_{\g,\mu,\K}^{\H}$. Then $P_{\g,\mu,\K}^{\H}$ is called a {\em simple
highest weight module}. As far as we know,
the generalized  Verma module  (or generalized highest weight module) was introduced and investigated in some other
references  (see, e.g., \cite{BZ,F,Ma2}).

 Fix a basis of $\Z^k$.
Assume that $M$ is a weight module for $\H$.
Then $M$ is called  {\em dense} if $\mathrm{Supp}(M)=\lambda+\Z^k$ for some $\lambda\in \H_0^*$. On the other hand,
if there exist $\lambda\in\mathrm{Supp}(M), \tau\in \mathbb{R}^k\setminus \{0\}$ and $\beta\in\Z^k$ such that
$$\mathrm{Supp}(M)\subseteq \lambda+\beta+{\Z^k}_{\leq0}^{(\tau)},$$
where ${\Z^k}_{\leq0}^{(\tau)}=\{\alpha\in{\Z^k}\mid(\tau|\alpha)\leq0\}$
and $(\tau|\alpha)$   is the usual inner product in $\mathbb{R}^k$, then  $M$ is called {\em cut}. Obviously, the modules
$P_{\g,\mu,\K}^\H$ defined above are cut modules. If there exist a $\Z$-basis
$\{\epsilon_1,\ldots,\epsilon_k\}$ of ${\Z^k}$ and $K\in \Z_+$ such that $\H_\alpha v=0$ for all
$\alpha=\Sigma_{i=1}^k\alpha_i\epsilon_i$
with $\alpha_i>K, i\in\{1,\ldots,k\}$, then the element $v \in M$ is called a
{\em generalized highest weight vector}.

The following general result of cut
$\H$-modules appeared in Theorem $4.1$ of \cite{MZ}.
\begin{theo}\label{the311}
Let $[\H_\alpha,\H_\beta]=\H_{\alpha+\beta}$ for all $\alpha,\beta\in\Z^k, k\in\Z_+$ with $\alpha \neq\beta$. Assume that
$M$ is a simple weight module over $\H$, which is neither dense nor trivial. If $M$
contains a generalized highest weight vector, then $M$ is a cut module.
\end{theo}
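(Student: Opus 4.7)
The plan is to exploit simplicity to write $M=U(\H)v$ for the generalized highest weight vector $v$, and then combine the strong bracket surjectivity $[\H_\alpha,\H_\beta]=\H_{\alpha+\beta}$ (for $\alpha\neq\beta$) with the non-density hypothesis to force $\mathrm{Supp}(M)$ into a translated half-space. Note that the shifted-orthant annihilation coming from $v$ alone is strictly weaker than the cut conclusion (which demands a full half-space of annihilators), so the non-density assumption must supply genuine extra information; the real work is propagating one missing weight along bracket chains.

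First I would set $\lambda=\mathrm{wt}(v)$, fix the $\Z$-basis $\{\epsilon_1,\dots,\epsilon_k\}$ and bound $K\in\Z_+$ from the hypothesis, and record that simplicity gives $M=U(\H)v$ and hence $\mathrm{Supp}(M)\subseteq \lambda+\Z^k$. I would introduce the annihilator set $\mathcal{A}=\{\alpha\in\Z^k\mid \H_\alpha v=0\}$, which already contains the shifted orthant $\{\sum\alpha_i\epsilon_i\mid \alpha_i>K\ \forall i\}$. Since $M$ is not dense, there is some $\mu\in\lambda+\Z^k$ with $M_\mu=0$; writing $\beta=\mu-\lambda$, the goal becomes to produce $\tau\in\mathbb{R}^k\setminus\{0\}$ so that $\mathrm{Supp}(M)\subseteq \lambda+\beta+{\Z^k}_{\leq 0}^{(\tau)}$.

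The engine of the argument will be bracket propagation of annihilation: if $M_{\lambda+\delta}=0$ and $\gamma\in\mathcal{A}$ with $\gamma\neq\delta$, then every $x\in\H_{\delta+\gamma}=[\H_\delta,\H_\gamma]$ can be written as a sum of brackets $[y,z]$ with $y\in\H_\delta$ and $z\in\H_\gamma$, whence
$$xv=yzv-zyv=-zyv\in \H_\gamma\cdot M_{\lambda+\delta}=0,$$
so that $\delta+\gamma\in\mathcal{A}$. Applied iteratively with the orthant already in $\mathcal{A}$ and with the newly produced annihilators (which, via the cyclicity of $v$ and simplicity of $M$, will in turn force additional weight spaces of $M$ to vanish), this chaining enlarges $\mathcal{A}$. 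A cone-separation argument in $\mathbb{R}^k$ will then yield $\tau\in\mathbb{R}^k\setminus\{0\}$ for which $\mathcal{A}$ contains the shifted open half-space $\{\alpha\mid(\tau|\alpha-\beta)>0\}$, and the cut bound on $\mathrm{Supp}(M)$ will follow by rewriting any monomial $x_{\gamma_1}\cdots x_{\gamma_n}v$ modulo commutators so that some factor from $\mathcal{A}$ reaches $v$ and annihilates it.

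The main obstacle will be precisely this bootstrap: upgrading a single missing weight plus orthant-type annihilators into a half-space of annihilators requires a delicate combinatorial chaining that crucially uses $[\H_\alpha,\H_\beta]=\H_{\alpha+\beta}$ for $\alpha\neq\beta$. The restriction $\alpha\neq\beta$ forces the chaining to avoid the diagonal at each step, and managing this while still reaching every direction of the desired half-space is the main technical point.
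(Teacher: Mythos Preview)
The paper does not prove this statement; it is quoted as Theorem~4.1 of Mazorchuk--Zhao \cite{MZ}, so there is no in-house argument to compare against, only the question of whether your plan could be completed to something like that proof.

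Your propagation step (if $M_{\lambda+\delta}=0$ and $\gamma\in\mathcal{A}$ with $\gamma\ne\delta$ then $\delta+\gamma\in\mathcal{A}$) is correct and is indeed one of the basic moves. However, the proposal stops exactly where the content lies, and two mechanisms are missing. First, enlarging $\mathcal{A}$ does not by itself produce new \emph{vanishing weight spaces}: to feed the loop you must show $M_{\lambda+\eta}=0$ for further $\eta$, and the parenthetical appeal to ``cyclicity and simplicity'' does not do this. What is actually needed is that $\mathcal{A}$ contain a sub\emph{semigroup} $S$ generating $\Z^k$ as a group, together with a PBW-reordering argument showing that every monomial $\H_{\gamma_1}\cdots\H_{\gamma_n}v$ with $\sum\gamma_i=\eta$ can be rewritten so that its rightmost factor lies in $S$; only then does $M_{\lambda+\eta}=0$ follow. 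Second, the ``cone-separation argument in $\mathbb{R}^k$'' you invoke is not a real-convexity trick but a discrete change of $\Z$-basis: one first uses bracket surjectivity to replace the shifted orthant $\{\alpha_i>K\}$ by an honest positive cone relative to a new $\Z$-basis (effectively reducing to $K=0$), and then the single missing weight, combined with a further explicit basis change, tilts that cone into a half-space. Without these two devices the bootstrap you correctly flag as the main obstacle cannot be closed, and the outline remains a plan rather than a proof.
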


\subsection{The know results}
The classification theorems of simple Harish-Chandra
modules over the classical $W$-algebra $W(2,2)$  and high rank Virasoro algebra will be recalled in
this section.

 The following  result  for   the high rank Virasoro algebra appeared in  \cite{LZ2}.
\begin{theo}\label{411}
Let   $k>1$.  Any non-trivial simple Harish-Chandra module
for $\V[\Z^k]$ is either a module of   intermediate series or isomorphic
to $P_{\g,\mu,\K}^{\V[\Z^k]}$ for some $\mu\in\Z^k\setminus\{0\}$, a subgroup $\g$ of $\Z^k$ with
$\Z^k=\g\oplus\Z \mu$ and a non-trivial simple intermediate series
$\V[\g]$-module $\K$.
\end{theo}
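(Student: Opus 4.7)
The plan is to partition non-trivial simple Harish-Chandra $\V[\Z^k]$-modules $M$ according to whether $\sup_\lambda \dim M_\lambda < \infty$. In the cuspidal (uniformly bounded) case I would identify $M$ with an intermediate-series module $\overline{M}(g,h;\Z^k)$; in the non-cuspidal case I would realise $M$ as the simple highest-weight quotient $P_{\g,\mu,\K}^{\V[\Z^k]}$ of a generalised Verma module, by first producing a generalised highest weight vector and then applying Theorem \ref{the311}.

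For the cuspidal case, the natural approach is to restrict along each rank-one subalgebra $\V[\Z\alpha] \cong$ classical Virasoro with $\alpha \in \Z^k \setminus \{0\}$ and apply Mathieu's classification. Uniform boundedness excludes highest-/lowest-weight contributions, so every ``Virasoro string'' $\bigoplus_n M_{\lambda+n\alpha}$ is a direct sum of intermediate-series pieces. Comparing the resulting actions of $L_\alpha$ and $L_\beta$ on a common weight space via $[L_\alpha,L_\beta]=(\beta-\alpha)L_{\alpha+\beta}$ (for $\Z$-independent $\alpha,\beta$) forces $\dim M_\lambda = 1$ uniformly; after normalising basis vectors $v_\beta$ one pins down scalars $(g,h)$ realising the formula $L_\alpha v_\beta = (g+\beta+h\alpha)v_{\alpha+\beta}$ of Definition \ref{def22}, i.e., $M \cong \overline{M}(g,h;\Z^k)$.

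For the non-cuspidal case, unbounded weight multiplicities yield (by a Fernando-type argument inside a coherent family, or via the $\A$-cover framework mentioned in the introduction) a vector $v \in M$ annihilated by $L_\beta$ for all $\beta$ in some open half-space of $\Z^k$, that is, a generalised highest weight vector. Since $M$ is simple, non-trivial, and not dense (any dense module with unbounded multiplicities would contradict $\dim M_\lambda < \infty$), Theorem \ref{the311}, applied to $\H = \overline{\V}[\Z^k]$, shows $M$ is a cut module. Choosing $\mu \in \Z^k \setminus \{0\}$ along the direction of cut and a complementary subgroup $\g$ with $\Z^k = \g \oplus \Z\mu$, the ``top slice'' $\K := \bigoplus_{\beta \in \g} M_{\lambda_0+\beta}$ is annihilated by $\V[\Z^k]_{\g}^+$, and the simplicity of $M$ forces $\K$ to be a simple $\V[\g]$-module; by Harish-Chandra on $M$ it is of intermediate-series type. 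The universal property of the generalised Verma module then gives a surjection $V_{\g,\mu,\K} \twoheadrightarrow M$ whose kernel is the unique maximal submodule, identifying $M \cong P_{\g,\mu,\K}^{\V[\Z^k]}$.

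The hardest step is the cuspidal case: promoting the rank-one Mathieu input to a uniform statement across all directions, and reducing weight multiplicities to $1$, carries the bulk of the technical work. A clean route is to invoke the $\A$-cover machinery to reduce to classification of bounded weight modules over an associative cover, but either approach forces one to grind through the combinatorial interaction of intermediate-series structures on distinct Virasoro strings.
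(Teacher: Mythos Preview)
The paper does not prove Theorem~\ref{411} at all: it is stated in Section~2.3 (``The known results'') and attributed to \cite{LZ2} (L\"u--Zhao, \emph{Adv.\ Math.}\ 2006). There is therefore no proof in the paper to compare your proposal against; the theorem is simply quoted as input for the later arguments on $\W[\Z^k]$.

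As for your sketch on its own merits, the overall two-case split (cuspidal vs.\ non-cuspidal) and the strategy in each case are the right ones and broadly match what is done in the L\"u--Zhao paper and its predecessors. However, your justification that $M$ is ``not dense'' in the non-cuspidal case is incorrect as written: you claim ``any dense module with unbounded multiplicities would contradict $\dim M_\lambda<\infty$,'' but density only says $\mathrm{Supp}(M)=\lambda+\Z^k$, which is perfectly compatible with each $M_\lambda$ being finite-dimensional yet of unbounded dimension. You need a genuine argument here---for instance, once you have produced a generalised highest weight vector $v$ with respect to a $\Z$-basis $\{\xi_1,\dots,\xi_k\}$, you must show that the weight $\lambda_v+\sum_i N\xi_i$ does not lie in $\mathrm{Supp}(M)$ for $N\gg 0$, which typically requires an additional inductive or support-counting step (this is where the actual work in \cite{LZ2} and in Mazorchuk--Zhao \cite{MZ} lies). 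Without it, Theorem~\ref{the311} cannot be invoked, and the identification $M\cong P_{\g,\mu,\K}^{\V[\Z^k]}$ does not follow. Your cuspidal sketch is honest about its incompleteness; the reduction of weight multiplicities to~$1$ across all rank-one directions is indeed the crux, and neither the Mathieu input alone nor the $\A$-cover remark fully substitutes for the combinatorial argument carried out in \cite{S,S1,S2,LZ2}.
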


The classification of Harish-Chandra modules over the classical $W$-algebra $W(2,2)$
  was given  in \cite{LGZ}, which was  reobtained   in\cite{GLZ1,CLW} by some new ideas.
  \begin{theo}\label{the44378}
   Any non-trivial simple Harish-Chandra module over
$\W[\Z]$ is either a module  of intermediate series, or a highest/lowest
weight module.
\end{theo}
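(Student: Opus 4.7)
The plan is to prove Theorem \ref{the44378} via the standard Mathieu-style dichotomy: partition simple Harish-Chandra $\W[\Z]$-modules $M$ into cuspidal (uniformly bounded) and non-cuspidal cases, and handle them separately. Since the center $C$ acts by a scalar on any simple $M$, one may work modulo it with $\overline{\W}[\Z]$, which satisfies the bracket hypothesis $[\H_\alpha,\H_\beta]=\H_{\alpha+\beta}$ for $\alpha\neq\beta$ required by Theorem \ref{the311} (both $(\beta-\alpha)L_{\alpha+\beta}$ and $(\beta-\alpha)W_{\alpha+\beta}$ are nonzero when $\alpha\neq\beta$).

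For the non-cuspidal direction, the goal is to produce a generalized highest weight vector in $M$ and then apply Theorem \ref{the311} at $k=1$. Because $L_0$ is shared, $M$ has unbounded weight multiplicities over the Virasoro subalgebra $\V[\Z]$ as well; adapting Mathieu's unbounded-multiplicity argument from \cite{M2,MP,S0} yields a vector $v\in M$ annihilated by $L_m$ for all $m\gg 0$. Using $[L_m,W_n]=(n-m)W_{m+n}$ and the finite-dimensionality of the weight spaces above the weight of $v$, one then checks that the $W_m$-actions can also be trivialized at the top, producing a genuine generalized highest weight vector for $\W[\Z]$. Theorem \ref{the311} forces $M$ to be cut, which for $k=1$ means $\mathrm{Supp}(M)$ is bounded above or below in $\Z$. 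The extremal weight vector is then killed by $L_m,W_m$ of one sign and generates $M$ by simplicity, so $M$ is a highest or lowest weight module.

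For the cuspidal direction, I would invoke the $\A$-cover machinery of \cite{B1,BF1} in the form adapted to $\W[\Z]$ in \cite{CLW}: build an $\A$-cover $\widehat{M}$ over a commutative algebra built from $\A=\C[t,t^{-1}]$ so that at a generic point the problem becomes a finite-dimensional representation of a jet algebra, then use the abelianness $[W_\alpha,W_\beta]=0$ combined with $[L_\alpha,W_\beta]=(\beta-\alpha)W_{\alpha+\beta}$ to force the action of the abelian ideal $\bigoplus_\alpha \C W_\alpha$ on $\widehat{M}$ (and hence on $M$) to vanish. This reduces $M$ to a simple cuspidal $\V[\Z]$-module, and Mathieu's theorem identifies it as an intermediate series module $\overline{M}(g,h;\Z)$. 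The main obstacle is this vanishing step: controlling the interaction between the abelian ideal $\bigoplus_\alpha\C W_\alpha$ and the Virasoro part on the $\A$-cover is what makes the cuspidal case delicate, whereas once a generalized highest weight vector is exhibited the non-cuspidal case is essentially a direct consequence of Theorem \ref{the311} and the one-dimensional geometry of the support.
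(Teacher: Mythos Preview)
The paper does not contain its own proof of Theorem~\ref{the44378}: it is quoted in Section~2.3 as a known result from \cite{LGZ,GLZ1,CLW}, and the proof of Theorem~\ref{the611} explicitly invokes it to settle the case $k=1$. So for the non-cuspidal half there is no in-paper argument to compare your sketch against. What the paper \emph{does} supply independently is the cuspidal half: the $\A$-cover machinery of Section~3 (Lemma~\ref{qas3.3}, Theorems~\ref{the566}, \ref{the511}, \ref{the513}) specializes to $k=1$ without change and is exactly the route you outline, including the key vanishing step (your ``main obstacle'') that the abelian ideal $\bigoplus_\alpha\C W_\alpha$ acts by zero on every simple cuspidal $\A\overline{\W}[\Z]$-module---this is precisely the content of Theorem~\ref{the566}.

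For the non-cuspidal half your plan is essentially that of \cite{LGZ}, but the two-stage version---first produce $v$ with $L_m v=0$ for $m\gg 0$ via a purely Virasoro argument, then upgrade to $W_m v=0$---is the weakest link. The relation $[L_m,W_n]=(n-m)W_{m+n}$ alone does not let you pass from $L_{>0}v=0$ to $W_{>0}v=0$, and Theorem~\ref{the311} as stated requires $M$ to be non-dense, which you have not yet argued. The cleaner fix (and the $k=1$ shadow of the dimension count \eqref{ga611} the paper uses for $k\ge 2$) is to run the comparison for both families at once: choose a weight $\lambda$ with $\dim M_\lambda>2(\dim M_{\lambda+1}+\dim M_{\lambda+2})$ (or the mirror inequality), and pick a nonzero $v\in M_\lambda$ with $L_1v=L_2v=W_1v=W_2v=0$. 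These four elements generate $\overline{\W}[\Z]^+$, so $v$ is already a genuine highest weight vector and simplicity finishes the argument directly, without passing through Theorem~\ref{the311}.
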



\section{Cuspidal module}\label{115eqw}
In this section, we   determine the simple cuspidal module for
the higher rank $W$-algebra  $W(2,2)$.

 Let $\Z^k=\bigoplus_{i=1}^{k}\Z \epsilon_i$, where $\epsilon_1,\epsilon_2,\ldots,\epsilon_k$ is a
$\Z$-basis of $\Z^k\subseteq \C$. Given $\alpha\in \Z^k$, we set
$\alpha=\sum_{i=1}^k\alpha_i\epsilon_i$ for $\alpha_i \in \Z$.
For any $\alpha,\beta\in\Z^k$ with $\alpha_i,\beta_i\in\N, i\in\{1,\ldots,k\}$, we denote
$$\alpha^\beta=\alpha_1^{\beta_1}\cdots\alpha_k^{\beta_k}\quad  \mathrm{and}\quad \beta!=\beta_1!\cdots\beta_k!.$$
Conveniently, we denote $\partial:=\frac{d}{dt}$. The high rank  $W$-algebra  $W(2,2)$  can be realized from  truncated high rank loop-Witt algebra $\overline{\V}[\Z^k]\otimes \big(\C[x]/\langle x^2\rangle\big)$ (see, e.g., \cite{LGZ,GLZ1}), namely,
\begin{eqnarray*}
&&L_\alpha=t^{\alpha+1}\partial\otimes1,
\ W_{\alpha}=t^{\alpha+1}\partial\otimes x,
\end{eqnarray*}
where $\alpha\in\Z^k$.
Denote $\A=\mathrm{span}\{t^\alpha\otimes 1 \mid \alpha\in\Z^k\}$, which is a unital associative algebra with multiplication $(t^\alpha\otimes1) (t^\beta\otimes1)=t^{\alpha+\beta}\otimes1$ for $\alpha,\beta\in \Z^k$. For convenience, we write $t^{\alpha+1}\partial=t^{\alpha+1}\partial\otimes1$ and $t^{\alpha}=t^{\alpha}\otimes1$ for $\alpha\in\Z^k$.
\subsection{$\A\overline{\W}[\Z^k]$-module}
We   describe the structure of cuspidal $\overline{\W}[\Z^k]$-modules that admit a compatible action of the commutative  unital algebra $\A$.

\begin{defi}\rm (see \cite{BF1})
A module  $M$ is called an {\em $\A\overline{\mathcal{W}}[\Z^k]$-module} if it is a module for both $\overline{\W}[\Z^k]$ and the commutative unital algebra $\A=\C[t^{\pm1}]\otimes1$
with these two structures being compatible:
\begin{eqnarray}\label{511}
 y(fv)=(yf)v + f(yv)  \quad \mathrm{for}\ f\in \A, y \in \overline{\W}[\Z^k], v\in M.
\end{eqnarray}
\end{defi}

Let $M$  be a weight module over $\A\overline{\W}[\Z^k]$. From \eqref{511}, we  see that the action
of $\A$ is compatible with the weight grading of $M$:
$$\A_\alpha M_\lambda\subset M_{\alpha+\lambda} \quad \mathrm{for}\ \alpha, \lambda\in\Z^k.$$
We suppose that  $\A\overline{\W}[\Z^k]$-module $M$ has a weight space decomposition,  and one of the weight
spaces   is finite-dimensional. According to all non-zero homogeneous elements of $\A$
are invertible,  we know that all weight spaces of $M$ have the same dimension. Then
   $M$ is also a free $\A$-module of a finite rank. It is clear that $\A\overline{\W}[\Z^k]$-module $M$ is
cuspidal (as $\overline{\W}[\Z^k]$-modules).

Assume that $M$ is a cuspidal $\A\overline{\W}[\Z^k]$-module. Let $W=M_g$ for $g\in\Z^k$ and $\mathrm{dim}(W)<\infty$.
From $M$ is a free $\A$-module, we can write
$$M\cong\A\otimes W.$$
\begin{lemm}\label{qas3.3}
Let   $M$ defined as above.
For any $\alpha,n\in\Z^k$, we have $W_\alpha(t^nv)=t^n(W_\alpha v)$ for $v\in M$.
\end{lemm}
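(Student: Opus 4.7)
The plan is to invoke the compatibility axiom \eqref{511} directly with $y=W_\alpha$ and $f=t^n$, which yields
$W_\alpha(t^n v) = (W_\alpha\cdot t^n)v + t^n(W_\alpha v).$
The desired identity $W_\alpha(t^n v)=t^n(W_\alpha v)$ is therefore equivalent to checking that $W_\alpha$ acts as zero on the associative algebra $\A$, i.e., $W_\alpha\cdot t^n=0$ in $\A$ for every $\alpha,n\in\Z^k$.

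To establish this vanishing, I would appeal to the realization of $\overline{\W}[\Z^k]$ inside the truncated loop-Witt algebra $\overline{\V}[\Z^k]\otimes(\C[x]/\langle x^2\rangle)$ recalled at the start of Section \ref{115eqw}, under which $W_\alpha=t^{\alpha+1}\partial\otimes x$. A direct commutator computation then gives $[W_\alpha,t^n\otimes 1]=n\,t^{\alpha+n}\otimes x$, which lies in the $\C[t^{\pm1}]\otimes x$ summand and therefore outside $\A=\C[t^{\pm1}]\otimes 1$. Consequently the induced derivation-action of $W_\alpha$ on $\A$ in the compatibility relation is the zero map, and feeding $W_\alpha\cdot t^n=0$ back into the displayed identity completes the proof.

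I do not anticipate any serious obstacle; the entire content of the lemma is to record this cancellation explicitly so that it can be used freely in subsequent arguments (in particular when one wants to trade $W$-actions past powers of $t$ inside $\A\otimes W$). The only point that requires a moment's care is being unambiguous about what the $\overline{\W}[\Z^k]$-action on $\A$ in Definition 3.2 amounts to: the Virasoro-like generators $L_\alpha$ act as the standard derivations $t^n\mapsto n\,t^{\alpha+n}$, whereas the $W$-generators act as zero because their commutator with any element of $\A$ picks up the nilpotent factor $x$ and thus escapes $\A$.
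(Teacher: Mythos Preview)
Your proposal has a genuine gap at the step where you pass from ``$[W_\alpha,t^n\otimes 1]=n\,t^{\alpha+n}\otimes x$ lies outside $\A$'' to ``the induced action of $W_\alpha$ on $\A$ is the zero map.'' That inference is not valid: an element landing outside a subspace does not mean the action on that subspace is trivial; if anything it would show $\A$ is \emph{not} preserved. In the compatibility relation~\eqref{511} the paper does \emph{not} interpret $(W_\beta t^n)$ as an element of $\A$; it keeps track of the actual element $n(t^{n+\beta}\otimes 1)(1\otimes x)$ of the truncated loop algebra and asks how it acts on $M$. A~priori nothing forces this operator to vanish: the factor $t^{n+\beta}\otimes 1\in\A$ already acts on $M$, and the central factor $1\otimes x$ could act by any scalar $\mu$, so $(W_\beta t^n)v=n\mu\,t^{n+\beta}v$ is the most one can say from compatibility alone.

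The substance of the lemma --- and exactly the point you are skipping --- is to force $\mu=0$. The paper does this by exploiting $[W_\alpha,W_\beta]=0$: expanding $[W_\alpha,W_\beta](t^nv)$ via \eqref{511} with the provisional $(W_\beta t^n)v=n\mu t^{n+\beta}v$ yields $n\mu^2(\beta-\alpha)t^{\alpha+\beta+n}v=0$, and freeness over $\A$ then gives $\mu=0$. Only after this computation can one conclude $(W_\beta t^n)v=0$ and hence $W_\alpha(t^nv)=t^n(W_\alpha v)$. Note also that Remark~\ref{rem322}, which records $[W_m,t^n]=0$ in the semidirect product $\overline{\W}[\Z^k]\ltimes\A$, is placed \emph{after} the lemma precisely because it is a consequence of it, not an input you may invoke.
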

\begin{proof}
For any $\beta,n\in\Z^k$, by \eqref{511}, we have
\begin{eqnarray}\label{4rnlk}
W_\beta(t^nv)=(W_\beta t^n)v+t^n(W_\beta  v).
 \end{eqnarray}
Note that  $W_\beta t^{n}=nt^{n+\beta}\otimes x=n(t^{n+\beta}\otimes1)(1\otimes x)$.
 It is clear that $[1\otimes x,\overline{\W}[\Z^k]\oplus \A]=0$.
Then there is a homomorphism of algebras $\chi: 1\otimes x \rightarrow \C$ such
that $1\otimes x $ acts on $M$ as   $\chi(1\otimes x )\in\C$. So    the action of   $W_\beta t^n$ on $M$ can be written as $n\mu t^{n+\beta}$, where $\mu\in\C$.
Now from
\begin{eqnarray*}
&&0
=[W_{\alpha},W_\beta](t^nv)
=n\mu^2(\beta-\alpha)t^{\alpha+\beta+n}v,
\end{eqnarray*}
  we  get $\mu=0$ by taking $n\neq0,\alpha\neq\beta$, namely, $(W_\beta t^n)v=0$.
Putting this into \eqref{4rnlk}, one has $W_\alpha(t^nv)=t^n(W_\alpha v)$ for $v\in M$. The lemma has been proved.
\end{proof}
\begin{remark}\label{rem322}
We note that the $\A\overline{\W}[\Z^k]$-module is a module for the semidirect product Lie
algebras $\overline{\W}[\Z^k]\ltimes\A$ (the action of  $\A$   as a unital commutative associative algebra).  The Lie
brackets between $\overline{\W}[\Z^k]$ and $\A$ are given by $[L_m, t^n]=nt^n, [W_m, t^n]=0$ for $m,n\in\Z^k$.

\end{remark}

For   $m \in\Z^k$,  we consider the following operator
$$\D (m):W \rightarrow W.$$
It   can be defined as the restriction to $W$  of the composition $t^{-m}\circ(t^{m+1}\partial)$ regarded also as an
operator on $M$. Note that $\D(0)=g \mathrm{Id}$.

According to  \eqref{511}, Lemma \ref{qas3.3} and the finite-dimensional operator $\D (m)$, we get the  action on $M$ as follows
\begin{eqnarray}\label{5533}
&&L_m(t^nv)=(t^{m+1}\partial)(t^nv)=nt^{m+n}v+t^{m+n}\D (m)v,
\ W_m(t^nv)=t^n(W_mv),
\end{eqnarray}
where $m,n\in\Z^k,v\in W$.
Based on
   \eqref{def1.1} and  \eqref{5533},
it is easy to derive the Lie
bracket (also see Lemma  $3.2$ in \cite{B1}):
\begin{eqnarray}\label{556677hjj}
[\D (s),\D (m)]=(m-s)\D (s+m)-m\D (m)+s\D (s).
\end{eqnarray}
Next, we   show that $\D (m)$ can be expressed as a polynomial in
$m=(m_1,\ldots,m_k)$.
\begin{theo}\label{533rree}
Assume that  $M$ is a cuspidal $\A\overline{\W}[\Z^k]$-module, $M= \A\otimes W$, where $W=M_g,g\in\Z^k$.
Then the action of $\overline{\mathcal{W}}[\Z^k]$ on $M$ is presented as
\begin{eqnarray*}
&&L_m(t^nv)=nt^{m+n}v+t^{m+n}\D (m)v,
\  W_m(t^nv)=t^n(W_mv),
\end{eqnarray*}
$m,n\in\Z^k,v\in W$,
where the family of operators $\D (m): W\rightarrow W$ can be shown as an $\mathrm{End}(W)$-valued polynomial in $m=(m_1,\ldots,m_k)$ with the constant term $\D (0)=g \mathrm{Id}$, and $\mathrm{Id}$ is the identification endomorphism of $W$.
\end{theo}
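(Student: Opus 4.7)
The action formulas are essentially in hand already. The formula $L_m(t^n v) = n t^{m+n} v + t^{m+n}\D(m) v$ is exactly \eqref{5533}, obtained from compatibility \eqref{511} together with the definition of $\D(m)$ as (the restriction to $W$ of) $t^{-m}\circ(t^{m+1}\partial)$; the formula $W_m(t^n v) = t^n(W_m v)$ is precisely Lemma \ref{qas3.3}. The normalization $\D(0) = g\,\mathrm{Id}$ is immediate: on $W = M_g$ we have $L_0 v = g v$, while by construction $L_0(t^0 v) = 0\cdot v + \D(0) v$. So the substantive content of the theorem is the polynomial dependence of $\D(m)$ on $m = (m_1,\ldots,m_k)$.

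My plan is to adapt the strategy of Billig--Futorny \cite{B1,BF1}. The essential algebraic input is the commutator relation \eqref{556677hjj},
\[
[\D(s),\D(m)] = (m-s)\D(s+m) - m\D(m) + s\D(s),
\]
in which $m, s$ are regarded as complex numbers via the embedding $\Z^k \subset \C$, combined with the finite-dimensionality of $W$. First I would rearrange the commutator as the recursion $(m-s)\D(s+m) = [\D(s),\D(m)] + m\D(m) - s\D(s)$ (valid whenever $m \neq s$), so that, for fixed $s$, the operator $\D$ at a translated argument is controlled by $\D$ at earlier points modulo inner derivations by $\D(s)$. Iterating this along each of the $k$ coordinate directions $\epsilon_1,\ldots,\epsilon_k$ reduces the problem to finitely much initial data.

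The second ingredient is that $\mathrm{End}(W)$ is finite-dimensional, so the iterated adjoint actions $\mathrm{ad}_{\D(\epsilon_i)}$ on the family $\{\D(m)\}_{m\in\Z^k}$ must eventually stabilise. Packaging the recursion via finite-difference operators $\Delta_i \D(m) := \D(m+\epsilon_i) - \D(m)$ (obtained by solving the commutator formula for $\D(m+\epsilon_i)$), I would show that sufficiently many applications of $\Delta_1,\ldots,\Delta_k$ annihilate $\D(m)$, which is exactly the criterion for $\D(m)$ to be an $\mathrm{End}(W)$-valued polynomial in $(m_1,\ldots,m_k)$. The main obstacle I anticipate is passing from the one-variable Virasoro-style polynomiality (in the complex scalar $m = \sum m_i \epsilon_i$) to genuine polynomial control in the $k$ coordinates $m_i$ \emph{separately}; for this I would systematically exploit the $\Z^k$-grading of the module and apply the finite-difference operators independently along each $\epsilon_i$, using that $\D(\epsilon_i)$, $\D(-\epsilon_i)$ and their iterated commutators span a finite-dimensional subspace of $\mathrm{End}(W)$ to bound the degree in each variable.
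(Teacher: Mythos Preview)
Your handling of the action formulas and the constant term $\D(0)=g\,\mathrm{Id}$ matches the paper exactly. For the polynomiality of $\D(m)$, however, the paper takes a shorter and structurally different route than your finite-difference plan. Rather than building a from-scratch argument with operators $\Delta_i$, the paper invokes the rank-$1$ result of Billig (Theorem~2.2 in \cite{B1}) as a black box to conclude that each one-parameter restriction $m_i\mapsto \D(m_i\epsilon_i)$ is already an $\mathrm{End}(W)$-valued polynomial in $m_i$. It then inducts on $j$: assuming $\D\bigl(\sum_{i<j}m_i\epsilon_i\bigr)$ is polynomial in $m_1,\dots,m_{j-1}$, one applies \eqref{556677hjj} with $s=m_j\epsilon_j$ and $m=\sum_{i<j}m_i\epsilon_i$ to obtain
\[
\Bigl(\sum_{i<j}m_i\epsilon_i-m_j\epsilon_j\Bigr)\,\D\Bigl(\sum_{i\le j}m_i\epsilon_i\Bigr)
=\bigl[\D(m_j\epsilon_j),\D(\textstyle\sum_{i<j}m_i\epsilon_i)\bigr]+\text{(polynomial terms)},
\]
and the prefactor on the left is nonzero whenever $m_j\neq 0$ by the $\Z$-linear independence of $\epsilon_1,\dots,\epsilon_j$ in $\C$. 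This pins down $\D$ on all of $\sum_{i\le j}\Z\epsilon_i$ and closes the induction at $j=k$.

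Your plan is viable in spirit but effectively asks you to redo Billig's rank-$1$ argument inside the proof. The specific step ``solving the commutator formula for $\D(m+\epsilon_i)$'' is exactly where the coefficient $(m-\epsilon_i)$ appears, so a single instance of \eqref{556677hjj} does not yield a clean expression for $\Delta_i\D(m)$, and hence no direct route to $\Delta_i^N\D=0$. What makes this go through in \cite{B1} is a more delicate use of finite-dimensionality of $\mathrm{End}(W)$ together with several instances of the relation; once that is in hand for each coordinate line $\Z\epsilon_i$, the paper's induction on the number of variables is the efficient way to upgrade to joint polynomiality in $(m_1,\dots,m_k)$. Your anticipated obstacle (``one-variable polynomiality in the scalar $m$ versus the $k$ coordinates'') is precisely what the inductive splitting $s=m_j\epsilon_j$, $m=\sum_{i<j}m_i\epsilon_i$ is designed to handle.
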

\begin{proof} By $m\in\Z^k$, we can write $m=\sum_{{i}=1}^km_{{i}}\epsilon_{{i}}$,  where $m_{{i}}\in\Z$, $\epsilon_1,\epsilon_2,\ldots,\epsilon_k$ is a
$\Z$-basis of $\Z^k$.
According to Theorem $2.2$ in \cite{B1}, we obtain that $\D (m_{{i}}\epsilon_{{i}})$ is a polynomial in $m_i\in \Z$ with
coefficients in $\mathrm{End}(W)$ for all ${{i}}\in \{1,\ldots,k\}$. Now suppose that $\D (\sum_{{{i}}=1}^{{{j}}-1}m_{{i}}\epsilon_{{i}})$
is a polynomial in $\alpha_1,\ldots,\alpha_{{{j}}-1}$ for some $1<{{j}}\leq k$. For $m_{{j}}\in\Z$, it follows from \eqref{556677hjj} that
\begin{eqnarray*}
&&(\sum_{{{i}}=1}^{{{j}}-1}m_{{i}}
\epsilon_{{i}}-m_{{j}}\epsilon_{{j}})\D (\sum_{{{i}}=1}^{{{j}}}m_{{i}}\epsilon_{{i}})
\\&=&[\D (m_{{j}}\epsilon_{{j}}),
\D(\sum_{{{i}}=1}^{{{j}}-1}m_{{i}}\epsilon_{{i}})]
+\sum_{{{i}}=1}^{{{j}}-1}(m_{{i}}\epsilon_{{i}})\D (\sum_{{{i}}=1}^{{{j}}-1}m_{{i}}\epsilon_{{i}})
-(m_{{j}}\epsilon_{{j}})\D (\alpha_{{j}}\epsilon_{{j}}).
\end{eqnarray*}
Consider  $m_{{j}}\neq0$.  Then from the  linearly independence of $\epsilon_1,\ldots,\epsilon_{{j}}$, one has $\sum_{{{i}}=1}^{{{j}}-1}m_{{i}}\epsilon_{{i}}
-m_{{j}}\epsilon_{{j}}\neq0$.  By the induction assumption, we conclude that  $\D (\sum_{{{i}}=1}^{{{j}}}m_{{i}}\epsilon_{{i}})$
is a polynomial in $m_1,\ldots,m_{{{j}}}$, where $1<{{j}}\leq k$.  Choosing  ${{j}}=k$, one can see that
$\D (m)$ is a polynomial in $m_1,\ldots,m_k$. By the definition of operator $\D(m)$, one has $\D (0)=g \mathrm{Id}$ for $g\in\C$. We complete  the proof.
\end{proof}

We can write  $\D (m)$ in  the   form  (also see \cite{B1,BF2,LG})
\begin{eqnarray}\label{dd555}
\sum_{{\widetilde{i}}\in\N^k}\frac{m^{\widetilde{i}}}{{\widetilde{i}}!}D^{({\widetilde{i}})},
\end{eqnarray}
where ${\widetilde{i}}!=\prod_{{\widetilde{j}}=1}^k{\widetilde{i}}_{\widetilde{j}}!$ and  only has a finite number of the  nonzero operators $D^{({\widetilde{i}})}\in \mathrm{End}(W)$. Note that $\D(0)=D^{(\widetilde{0})}$.
For $m,s\in\Z^k$, by \eqref{556677hjj}, we have
\begin{eqnarray}\label{eq55}
&&\nonumber \sum_{{\widetilde{i}},{\widetilde{j}}\in\N^k}\frac{s^{\widetilde{i}}m^{\widetilde{j}}}{{\widetilde{i}}!{\widetilde{j}}!}[D^{({\widetilde{i}})},D^{({\widetilde{j}})}]
\\&=&\nonumber  (\sum_{{\widetilde{i}}\in\N^k}\frac{s^{\widetilde{i}}}{{\widetilde{i}}!}D^{({\widetilde{i}})})(\sum_{{\widetilde{j}}\in\N^k}\frac{m^{\widetilde{j}}}{{\widetilde{j}}!}D^{({\widetilde{j}})})
-(\sum_{{\widetilde{j}}\in\N^k}\frac{m^{\widetilde{j}}}{{\widetilde{j}}!}D^{({\widetilde{j}})})(\sum_{{\widetilde{i}}\in\N^k}\frac{s^{\widetilde{i}}}{{\widetilde{i}}!}D^{({\widetilde{i}})})
\\&=&\nonumber [\D (s),\D (m)]=(s-m)\D (s+m)-s\D (s)+m\D (m)
\\&=& \sum_{l=1}^k\big(\sum_{{\widetilde{i}},{\widetilde{j}}\in\N^k}\frac{s^{\widetilde{i}}m^{\widetilde{j}}}{{\widetilde{i}}!({\widetilde{j}}-\epsilon_l)!}D^{({\widetilde{i}}+{\widetilde{j}}-\epsilon_l)}\big)
-\sum_{l=1}^k\big(\sum_{{\widetilde{i}},{\widetilde{j}}\in\N^k}\frac{s^{\widetilde{i}}m^{\widetilde{j}}}{({\widetilde{i}}-\epsilon_l)!{\widetilde{j}}!}D^{({\widetilde{i}}+{\widetilde{j}}-\epsilon_l)}\big).
\end{eqnarray}
Comparing the   coefficients of $\frac{s^{\widetilde{i}}m^{\widetilde{j}}}{{\widetilde{i}}!{\widetilde{j}}!}$   in \eqref{eq55},  we check that
\begin{eqnarray}\label{22231lk}
[D^{({\widetilde{i}})},D^{({\widetilde{j}})}]=\left\{\begin{array}{llll}
\sum_{l=1}^k({\widetilde{j}}_l-{\widetilde{i}}_l)\epsilon_lD^{({\widetilde{i}}+{\widetilde{j}}-\epsilon_l)} &\mbox{if}\
{\widetilde{i}},{\widetilde{j}}\in\N^k\backslash \{0\},\\[4pt]
0  &\mbox{if}\
{\widetilde{i}}=0\ \mathrm{or}\ {\widetilde{j}}=0.
\end{array}\right.
\end{eqnarray}

From \eqref{22231lk},
the operators $\G=\mathrm{span}\big\{D^{(\widetilde{i})}\mid \widetilde{i} \in \N^k\backslash \{0\}\big\}$ yield a Lie algebra.
For $\widetilde{i}\in\N^k$, let $|\widetilde{i}|=\widetilde{i}_1+\widetilde{i}_2+\cdots+\widetilde{i}_k$.
Denote $$\G_p=\mathrm{span}\{D^{(\widetilde{i})}\mid \widetilde{i}\in\N^k,|\widetilde{i}|-1=p\}\quad \mathrm{for}\ p\in\N.$$
Then  $\G=\bigoplus_{p\in\N}\G_p$  is a $\Z$-graded Lie algebra (also see \cite{LG,BF2,B1}).
From  \eqref{22231lk}, we know that $\G_0=\mathrm{span}\{D^{(\epsilon_{i})}\mid {i}=1,\ldots,k\}$ is a subalgebra of $\G$, whose Lie algebra structure is presented as
$$[D^{(\epsilon_{i})},D^{(\epsilon_{j})}]
=\epsilon_{j}D^{(\epsilon_{i})}-\epsilon_{i}D^{(\epsilon_{j})},$$
where ${i},{j}\in\{1,2,\ldots,k\}$.
It is easy to get  that $[\G_0,\G_0]$ is nilpotent.  Hence,  $\G_0$ is a solvable Lie
algebra. Now  we   define the  following one-dimensional $\G$-module $V(h)=\C v\neq0$ for
any $h\in\C$:
\begin{eqnarray}
D^{(\epsilon_{i})}v=h \epsilon_{i}v \quad \mathrm{for}\ {i}\in\{1,\ldots,k\}.
\end{eqnarray}
The following   lemma  was proved in \cite{LG}.

\begin{lemm}\label{555rree}
Assume that  $T$ and $W$ are    finite-dimensional simple modules over  $\G_0$ and $\G$,  respectively.  Then
\begin{itemize}
\item[\rm(a)] $T\cong V(h)$ for  $h\in\C$.
\item[\rm(b)] $D^{(\widetilde{i})}W=0$ for any $\widetilde{i}\in\N^k$ with $|\widetilde{i}|$ sufficiently large;
\item[\rm(c)] $\G_pW=0$ for all $p\in\Z_+$ and $W\cong V(h)$ as a $\G_0$-module for some $h\in\C$.
\end{itemize}
\end{lemm}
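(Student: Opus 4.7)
For (a), I would invoke Lie's theorem on the solvable Lie algebra $\G_0$. The bracket $[D^{(\epsilon_i)}, D^{(\epsilon_j)}] = \epsilon_j D^{(\epsilon_i)} - \epsilon_i D^{(\epsilon_j)}$ identifies $D^{(\epsilon_i)}/\epsilon_i \equiv D^{(\epsilon_j)}/\epsilon_j$ in $\G_0/[\G_0,\G_0]$, so the abelianization is one-dimensional and $\G_0$ is solvable. Thus any finite-dimensional simple $\G_0$-module $T$ is one-dimensional, say $T = \C v$ with $D^{(\epsilon_i)} v = c_i v$; evaluating the same bracket on $v$ yields $\epsilon_j c_i = \epsilon_i c_j$, so $c_i = h\epsilon_i$ for $h := c_i/\epsilon_i$, giving $T \cong V(h)$.

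For (b), I propose an eigenvalue-shift argument. For each $i$, let $\sigma_i \subset \C$ be the finite set of generalized eigenvalues of $D^{(\epsilon_i)}$ acting on $W$. The identity
\[
[D^{(\epsilon_i)}, D^{(\widetilde{j})}] = (\widetilde{j}_i - 1)\epsilon_i D^{(\widetilde{j})} + \sum_{l \neq i}\widetilde{j}_l \epsilon_l D^{(\epsilon_i + \widetilde{j} - \epsilon_l)}
\]
shows that $\mathrm{ad}(D^{(\epsilon_i)})$ acts upper-triangularly on each graded piece $\G_p$ with respect to the filtration by $|\widetilde{j}| - \widetilde{j}_i$, with diagonal eigenvalue $(\widetilde{j}_i - 1)\epsilon_i$. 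Consequently $D^{(\widetilde{j})}$ decomposes into $\mathrm{ad}(D^{(\epsilon_i)})$-eigenvectors for eigenvalues in $\{m\epsilon_i : \widetilde{j}_i - 1 \le m \le |\widetilde{j}| - 1\}$, so acting on a generalized eigenspace $W^\lambda$ of $D^{(\epsilon_i)}$, the image $D^{(\widetilde{j})} W^\lambda$ can only contribute to generalized eigenspaces with eigenvalues in $\{\lambda + m\epsilon_i\} \cap \sigma_i$. Finiteness of $\sigma_i$ then bounds each $\widetilde{j}_i$, and the pigeonhole principle applied across $i = 1,\ldots,k$ bounds $|\widetilde{j}|$, proving (b).

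For (c), part (b) implies that $\rho(\G) \subset \mathrm{End}(W)$ is finite-dimensional, and since $[\G_p, \G_q] \subset \G_{p+q}$, the lower central series of $\rho(\bigoplus_{p \ge 1} \G_p)$ lies in $\rho(\bigoplus_{p \ge N} \G_p)$, which vanishes for $N$ large. Thus $\rho(\bigoplus_{p \ge 1}\G_p)$ is a nilpotent ideal of $\rho(\G)$, and combined with the solvability of $\rho(\G_0)$ this makes $\rho(\G)$ solvable. Lie's theorem then produces a common eigenvector $v \in W$ for all of $\rho(\G)$, whence $\C v$ is a nonzero $\G$-submodule and simplicity of $W$ forces $W = \C v$. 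Since any character of $\G$ vanishes on $[\G,\G] \supseteq \bigoplus_{p \ge 1}\G_p$, we obtain $\G_p W = 0$ for $p \ge 1$, and part (a) identifies $W$ as $V(h)$ for some $h \in \C$.

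The main obstacle I expect is the careful bookkeeping in the eigenvalue-shift step for (b): the off-diagonal corrections in the above commutator raise $\widetilde{j}_i$ while lowering some $\widetilde{j}_l$ with $l \neq i$, so the diagonal entry $(\widetilde{j}_i - 1)\epsilon_i$ only controls the leading behavior along the non-$i$ filtration. One must iterate the commutator relation and verify that, after exhausting all possible corrections, the $\mathrm{ad}(D^{(\epsilon_i)})$-eigenvalues arising in the decomposition of $D^{(\widetilde{j})}$ really do fill out exactly the interval $[(\widetilde{j}_i - 1)\epsilon_i,\, (|\widetilde{j}|-1)\epsilon_i]$; this is the technical heart of the proof, carried out in \cite{LG}.
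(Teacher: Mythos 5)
Your argument is self-contained where the paper is not: the paper gives no proof of this lemma at all, it simply quotes it from \cite{LG}, so the only question is whether your proof stands on its own. Parts (a) and (b) are essentially right. For (b), the filtration of each $\G_p$ by the size of the $i$-th component of the multi-index is exactly the right device, and the worry in your final paragraph is unfounded: you never need the $\mathrm{ad}(D^{(\epsilon_i)})$-spectrum to fill out the whole interval, only the lower bound $m\ge \widetilde{j}_i-1$ on the eigenvalues occurring in the decomposition of $D^{(\widetilde{j})}$, and that is immediate because every correction term $D^{(\widetilde{j}+\epsilon_i-\epsilon_l)}$ has $i$-th component $\widetilde{j}_i+1$. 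Since a generalized $\mathrm{ad}(D^{(\epsilon_i)})$-eigenvector of eigenvalue $m\epsilon_i$ maps the generalized $D^{(\epsilon_i)}$-eigenspace $W^{\lambda}$ into $W^{\lambda+m\epsilon_i}$ (note: say ``generalized eigenvectors'', not ``eigenvectors''---$\mathrm{ad}(D^{(\epsilon_i)})$ need not be semisimple, but the shift property is all you use), a large $\widetilde{j}_i$ pushes every component outside the finite spectrum $\sigma_i$, so $D^{(\widetilde{j})}W=0$, and the pigeonhole over $i$ finishes (b). In (a), however, ``the abelianization is one-dimensional, hence $\G_0$ is solvable'' is not a valid inference (compare $\C\oplus\mathfrak{sl}_2$, whose abelianization is also one-dimensional); you must additionally check that $[\G_0,\G_0]$, spanned by the elements $\epsilon_jD^{(\epsilon_i)}-\epsilon_iD^{(\epsilon_j)}$, is abelian---a one-line computation, and a fact the paper records just before the lemma. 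The rest of (a) is fine.

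The one substantive gap is in (c): you assert $\bigoplus_{p\ge1}\G_p\subseteq[\G,\G]$ without proof, and this is not obvious, because in $[D^{(\epsilon_i)},D^{(\widetilde{j})}]=(\widetilde{j}_i-1)\epsilon_iD^{(\widetilde{j})}+\sum_{l\neq i}\widetilde{j}_l\epsilon_lD^{(\widetilde{j}+\epsilon_i-\epsilon_l)}$ the diagonal coefficient vanishes exactly when $\widetilde{j}_i=1$, so a naive downward induction on $\widetilde{j}_i$ gets stuck. The inclusion is true and can be repaired in one line: for $|\widetilde{j}|\ge2$ pick $i$ with $\widetilde{j}_i\ge1$; then $[D^{(2\epsilon_i)},D^{(\widetilde{j}-\epsilon_i)}]$ has the same off-diagonal terms as $[D^{(\epsilon_i)},D^{(\widetilde{j})}]$ but diagonal coefficient $(\widetilde{j}_i-3)\epsilon_i$, so $[D^{(\epsilon_i)},D^{(\widetilde{j})}]-[D^{(2\epsilon_i)},D^{(\widetilde{j}-\epsilon_i)}]=2\epsilon_iD^{(\widetilde{j})}$, whence $D^{(\widetilde{j})}\in[\G,\G]$ (equivalently, once $W=\C v$, apply the resulting character of $\G$ to these two brackets). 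With this inserted, the rest of your (c)---nilpotency of the image of $\bigoplus_{p\ge1}\G_p$ via (b), solvability of the image of $\G$, Lie's theorem forcing $\dim W=1$, then (a)---is complete and matches the expected conclusion.
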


\begin{theo}\label{the566}
Any simple cuspidal $\A\overline{\W}[\Z^k]$-module   is isomorphic to a
module of intermediate series $\overline{M}(g,h;\Z^k)$ for some $g,h\in\C$.
\end{theo}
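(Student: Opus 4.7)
The plan is to reduce to the case where the $W_\alpha$-action on $M$ is trivial, and then invoke the known cuspidal classification for the high rank Witt algebra to conclude.

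Since each $W_\alpha$ acts $\A$-linearly by Lemma~\ref{qas3.3}, there are operators $\widetilde{W}_\alpha\in\mathrm{End}(W)$ with $W_\alpha v=t^\alpha\widetilde{W}_\alpha v$ for $v\in W$. Commutativity $[W_\alpha,W_\beta]=0$ gives $[\widetilde{W}_\alpha,\widetilde{W}_\beta]=0$, and evaluating $[L_m,W_\alpha]=(\alpha-m)W_{m+\alpha}$ on $v\in W$ via \eqref{5533} yields the key identity
\[
\alpha\widetilde{W}_\alpha+[\D(m),\widetilde{W}_\alpha]=(\alpha-m)\widetilde{W}_{m+\alpha}.
\]
Setting $\alpha=0$ gives $\widetilde{W}_m=-\tfrac{1}{m}[\D(m),\widetilde{W}_0]$ for $m\neq 0$, so $\widetilde{W}_m$ is polynomial in $m=(m_1,\ldots,m_k)$ with coefficients $\widetilde{W}^{(\widetilde{j})}\in\mathrm{End}(W)$; comparing coefficients also extracts $[D^{(\epsilon_j)},\widetilde{W}_0]=-\epsilon_j\widetilde{W}_0$ for each $j$.

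Next, $\A\overline{\W}[\Z^k]$-submodules of $M$ correspond (via $W'=M'\cap W$) to subspaces $W'\subseteq W$ stable under both $\G$ and the abelian family $\{\widetilde{W}^{(\widetilde{j})}\}$. Set $N:=\bigcap_{\alpha\in\Z^k}\ker\widetilde{W}_\alpha$; by Zariski density of $\Z^k$ in $\C^k$, $N=\bigcap_{\widetilde{j}}\ker\widetilde{W}^{(\widetilde{j})}$. Since $[D^{(\widetilde{i})},\widetilde{W}^{(\widetilde{j})}]$ lies in $\mathrm{span}\{\widetilde{W}^{(\widetilde{l})}\}$ (a formal consequence of the key identity), $N$ is $\G$-stable, so $\A\otimes N$ is an $\A\overline{\W}[\Z^k]$-submodule of $M$, and by simplicity either $N=0$ or $N=W$. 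The case $N=0$ is the main obstacle: the relation $[D^{(\epsilon_j)},\widetilde{W}_0]=-\epsilon_j\widetilde{W}_0$ forces $\widetilde{W}_0$ to shift $\mathrm{ad}(D^{(\epsilon_j)})$-eigenvalues down by $\epsilon_j$ and hence, by finite-dimensionality of $W$, to be nilpotent; chasing the top surviving power $\widetilde{W}_0^r$ against the identity $\widetilde{W}_\alpha u=\tfrac{1}{\alpha}\widetilde{W}_0\D(\alpha)u$ for $u\in\ker\widetilde{W}_0$ should produce a nonzero element of $N$, yielding a contradiction. Hence $N=W$ and $W_\alpha M=0$ for every $\alpha$.

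Finally, with $W_\alpha M=0$ the action on $M$ factors through $\overline{\V}[\Z^k]$, so $M$ becomes a simple cuspidal $\A\overline{\V}[\Z^k]$-module. The rank-$k$ analogue of Theorem~$2.2$ in \cite{B1}, combined with Lemma~\ref{555rree}(c), forces the corresponding $\G$-module $W$ to be one-dimensional with $W\cong V(h)$ and $\D(m)=(g+hm)\,\mathrm{Id}$. The resulting formulas $L_m(t^nv)=(n+g+hm)t^{m+n}v$ and $W_m(t^nv)=0$ match Definition~\ref{def22} via $v_\beta:=t^\beta v$, giving $M\cong\overline{M}(g,h;\Z^k)$.
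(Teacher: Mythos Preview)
Your route inverts the paper's. The paper first applies Lemma~\ref{555rree} to $W$ to get $\D(\alpha)=(g+h\alpha)\mathrm{Id}$, and only then uses $[L_\alpha,W_\beta]=(\beta-\alpha)W_{\alpha+\beta}$ to derive the relation $(\beta-\alpha)t^n(W_{\alpha+\beta}v)=\beta t^{n+\alpha}(W_\beta v)$, which kills all $W_\gamma$ after the specializations $\beta=0$ and $\alpha=-\beta$. You instead try to kill the $W$-part first via the kernel $N$ and then invoke the Witt classification. Your order has the virtue of postponing Lemma~\ref{555rree} until $W$ is genuinely a simple $\G$-module; the paper's order is much shorter once that step is granted.

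However, two steps in your argument are not justified. First, $\widetilde{W}_m=-\tfrac{1}{m}[\D(m),\widetilde{W}_0]$ does \emph{not} exhibit $\widetilde{W}_m$ as a polynomial in $(m_1,\ldots,m_k)$: here $m=\sum_i m_i\epsilon_i$ is a nontrivial linear form, and division by it need not preserve polynomiality, so your extraction of $[D^{(\epsilon_j)},\widetilde{W}_0]=-\epsilon_j\widetilde{W}_0$ and the nilpotency of $\widetilde{W}_0$ are unsupported. Second, the exclusion of $N=0$ is only a sketch (``should produce''); the indicated chase with $\widetilde{W}_0^r$ does not visibly terminate.

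Both gaps can be closed by a single argument that avoids polynomiality of $\widetilde{W}_m$. Your key identity with $m=\alpha$ gives $[\D(\alpha),\widetilde{W}_\alpha]=-\alpha\widetilde{W}_\alpha$ for every $\alpha\neq 0$; the eigenvalue-shift argument then makes each such $\widetilde{W}_\alpha$ nilpotent, so the commuting family $\{\widetilde{W}_\alpha:\alpha\neq 0\}$ has a nonzero common kernel $N'$. For $u\in N'$ and $\alpha\neq 0$, the key identity reads $-\widetilde{W}_\alpha\D(m)u=(\alpha-m)\widetilde{W}_{m+\alpha}u$, which vanishes for all $m\neq -\alpha$; but $m\mapsto\widetilde{W}_\alpha\D(m)u$ is polynomial by Theorem~\ref{533rree}, hence vanishes identically, and evaluating at $m=-\alpha$ gives $2\alpha\widetilde{W}_0u=0$. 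Thus $N'\subseteq N$ and $N\neq 0$, completing your reduction.
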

\begin{proof}
  Let $M$ be a simple cuspidal $\A\overline{\W}[\Z^k]$-module, $M= \A\otimes W$, where $W=M_g,g\in\Z^k$. For $n,\alpha\in\Z^k,v\in W$, based on  Theorem \ref{533rree}, Lemma \ref{555rree} and \eqref{dd555},  we   check that
\begin{eqnarray}\label{qiuy789}
L_\alpha (t^nv)&=&\nonumber nt^{\alpha+n} v+t^{\alpha+n}(\D(\alpha) v)
\\&=&\nonumber t^{\alpha+n}\big((n+\D(0)+\sum_{{\widetilde{i}}\in\N^k\setminus\{0\}}\frac{\alpha^{\widetilde{i}}}
{{\widetilde{i}}!}D^{({\widetilde{i}})})v\big)
\\&=&\nonumber t^{\alpha+n}\big((n+\D(0)+\sum_{i=1}^k\alpha_iD^{({\epsilon_i})})v\big)
\\&=&\nonumber t^{\alpha+n}\big((n+g\mathrm{Id}+\sum_{i=1}^kh\alpha_i\epsilon_i)v\big)
\\&=&(n+g+h\alpha)(t^{\alpha+n}v),
\end{eqnarray}
where $g,h\in\C$.
Using \eqref{qiuy789} and $(\beta-\alpha)W_{\alpha+\beta}(t^nv)=(L_\alpha W_\beta-W_\beta L_\alpha)(t^nv)$, we obtain
\begin{eqnarray}\label{564rf}
(\beta-\alpha)t^n(W_{\alpha+\beta}v)=\beta t^{n+\alpha}(W_\beta v).
\end{eqnarray}
Taking $\beta=0$ in \eqref{564rf}, we immediately get $t^n(W_\alpha v)=0$ for $\alpha\neq0$.
Considering $\alpha=-\beta\neq0$ in \eqref{564rf}  again, one  checks $t^n(W_0v)=0$.
Then we conclude $t^n(W_\alpha v)=0$ for $\alpha,n\in\Z^k$, that is to say, $W_\alpha(t^nv)=0$.
This completes  the proof.
\end{proof}
\subsection{$\A$-cover of a cuspidal $\overline{\W}[\Z^k]$-module}
We first  recall the    definitions of coinduced module and   $\A$-cover (see \cite{BF1}).
\begin{defi}\rm
A module {\em coinduced} from a $\overline{\W}[\Z^k]$-module $M$ is the space $\mathrm{Hom}(\A,M)$ with the actions of $\overline{\W}[\Z^k]$ and $\A$ as follows
\begin{eqnarray*}
&&(\mathfrak{a}\varphi)(f)=\mathfrak{a}(\varphi(f))-\varphi(\mathfrak{a}(f)),
\ (y\varphi)(f)=\varphi(yf),
\end{eqnarray*}
where $\varphi\in\mathrm{Hom}(\A,M), \mathfrak{a}\in \overline{\W}[\Z^k],f,y\in \A$.
\end{defi}
\begin{defi}\rm\label{def5777}
An {\em $\A$-cover} of a cuspidal module $M$ over   $\overline{\W}[\Z^k]$ is an $\A\overline{\W}[\Z^k]$-submodule $$\widehat{M}=\mathrm{span}\{\phi(\mathfrak{a},w)\mid \mathfrak{a}\in\overline{\W}[\Z^k],w\in M\}\subset\mathrm{Hom}(\A,M),$$
where $\phi(\mathfrak{a},w):\A\rightarrow M$ is defined as
$$\phi(\mathfrak{a},w)(f)=(f\mathfrak{a})(w).$$
\end{defi}
The action of $\A\overline{\W}[\Z^k]$ on $\widehat{M}$ is given by
\begin{eqnarray*}
&&\mathfrak{b}\phi(\mathfrak{a},w)=\phi([\mathfrak{b},\mathfrak{a}],w)+\phi(\mathfrak{a},\mathfrak{b}w),
\\&&f\phi(\mathfrak{a},w)=\phi(f\mathfrak{a},w)\quad \mathrm{for}\ \mathfrak{a},\mathfrak{b}\in \overline{\W}[\Z^k],w\in M,f\in \A.
\end{eqnarray*}
Let $$\mathfrak{K}(M)=\left\{\sum_{\alpha\in\Z^k}\mathfrak{a}_\alpha\otimes w_{\alpha}\in\overline{\W}[\Z^k]\otimes M
\ \bigg|\ \sum_{\alpha\in\Z^k}(f\mathfrak{a}_\alpha) w_{\alpha}=0,\quad\forall f\in\A \right\}.$$ Then $\mathfrak{K}(M)$ is an
$\A\overline{\W}[\Z^k]$-submodule of $\overline{\W}[\Z^k]\otimes M$. The $\A$-cover $\widehat{M}$ can also be constructed as a quotient  $\A\overline{\W}[\Z^k]$-module
$$(\overline{\W}[\Z^k]\otimes M)/\mathfrak{K}(M),$$
 where $\overline{\W}[\Z^k]M=M$. Clearly, the following linear map
 \begin{eqnarray*}
\Theta:\quad  & \widehat{M}&\longrightarrow \overline{\W}[\Z^k]M
\\ &\mathfrak{a}\otimes w+\mathfrak{K}(M)&\longmapsto \mathfrak{a}w
\end{eqnarray*} is a $\overline{\W}[\Z^k]$-module epimorphism.

\begin{lemm}{\rm (see \cite{BF1})}\label{lemm6765}
Let $M$ be a  cuspidal module for $\overline{\W}[\Z^k]$. Then there exists $l\in \Z_+$ such that for all
$\alpha,\beta,\gamma\in\Z^k$ the operator $\Omega_{\alpha,\beta}^{(l,\gamma)}=\sum_{{i}=0}^l(-1)^{i}{l\choose {i}}L_{\alpha-{i}\gamma}L_{\beta+{i}\gamma}$ annihilates $M$.
\end{lemm}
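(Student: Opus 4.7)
The plan is to reduce the high-rank statement to the rank-one (classical Virasoro) case. Since $\Omega_{\alpha,\beta}^{(l,\gamma)}$ only involves the $L$-generators, my first observation is that $M$ restricts to a cuspidal module over the high-rank Witt subalgebra $\overline{\V}[\Z^k]=\mathrm{span}\{L_\alpha\mid\alpha\in\Z^k\}$ with the same uniform multiplicity bound $K$, since the weight grading is induced by $L_0$, which lies in this subalgebra. Thus the full $W(2,2)$ structure (in particular the $W_\alpha$'s) plays no role in the argument.

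I would treat the case $\gamma=0$ separately: every summand of $\Omega_{\alpha,\beta}^{(l,0)}$ equals $L_\alpha L_\beta$ and $\sum_{i=0}^{l}(-1)^{i}\binom{l}{i}=0$ for any $l\geq 1$, so the conclusion is immediate. For $\gamma\neq 0$, I would fix $\gamma$ and further restrict $M$ to the rank-one Virasoro subalgebra $\overline{\V}[\Z\gamma]=\mathrm{span}\{L_{n\gamma}\mid n\in\Z\}$, which is isomorphic to the classical centerless Witt algebra. This restriction remains cuspidal with the same bound $K$, uniformly in the choice of $\gamma$.

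The core step is to invoke the polynomial-action principle from \cite{BF1} (equivalently, Theorem $2.2$ of \cite{B1}, whose flavour already appears in the derivation of $\D(m)$ leading to \eqref{dd555}): on any cuspidal Witt-module, the matrix of $L_n$ on any fixed weight space is polynomial in $n$, of degree at most some $d=d(K)$ depending only on the multiplicity bound. Hence, for any $v\in M_\lambda$, the vector $L_{\alpha-i\gamma}L_{\beta+i\gamma}v\in M_{\lambda+\alpha+\beta}$ is, as a function of the integer parameter $i$, polynomial of degree at most $2d$ with values in a space of dimension at most $K$. Choosing any $l>2d$, the $l$-th forward-difference operator $\sum_{i=0}^{l}(-1)^{i}\binom{l}{i}$ annihilates every polynomial of degree $<l$, and so $\Omega_{\alpha,\beta}^{(l,\gamma)}v=0$.

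The main obstacle, and the point where one must be careful, is the uniformity in $\alpha,\beta,\gamma$: the bound $l$ must not depend on these parameters. However, $d$ depends only on $K$, which is the global cuspidal bound common to every rank-one restriction, so a single $l=l(K)$ works simultaneously for all $\alpha,\beta,\gamma\in\Z^k$. The only genuine content to verify is the rank-one polynomial-action claim, but this is precisely what is quoted from \cite{B1,BF1}, so the proof is essentially a direct appeal to those references applied to the Witt subalgebra of $\overline{\W}[\Z^k]$.
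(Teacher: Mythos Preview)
The paper offers no proof of this lemma; it is quoted verbatim from \cite{BF1}.  Your proposal therefore has to stand on its own, and there is a real gap.

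Your reduction to the Witt subalgebra $\overline{\V}[\Z^k]$ is fine, and the $\gamma=0$ case is handled correctly.  The trouble is the next step.  Restricting $M$ to $\overline{\V}[\Z\gamma]$ gives a cuspidal module for a rank-one Witt algebra, but the operators appearing in $\Omega_{\alpha,\beta}^{(l,\gamma)}$ are $L_{\alpha-i\gamma}$ and $L_{\beta+i\gamma}$, and for generic $\alpha,\beta\in\Z^k$ these do \emph{not} lie in $\overline{\V}[\Z\gamma]$.  So whatever annihilation result you have for the rank-one subalgebra simply does not speak about these operators.

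The deeper problem is your invocation of Theorem~2.2 of \cite{B1}.  That theorem asserts polynomiality of the operator $\D(m)=t^{-m}\circ L_m\big|_{W}$ on an $\A\overline{\V}$-module; the factor $t^{-m}$ is what identifies all weight spaces with a single $W$ and makes ``polynomial in $m$'' meaningful.  A general cuspidal module carries no $\A$-action, so there is no $\D(m)$ and no canonical way to compare $L_{\beta+i\gamma}v$ for different $i$ (they live in different weight spaces).  Indeed, the logical flow of the paper is the reverse of what you suggest: Lemma~\ref{lemm6765} is an input to Proposition~\ref{pro510}, which shows the $\A$-cover $\widehat{M}$ is cuspidal; only \emph{then} does $\widehat{M}$ acquire an $\A$-structure and Theorem~\ref{533rree}/\cite[Theorem~2.2]{B1} become available.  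Using the polynomial action to prove Lemma~\ref{lemm6765} is therefore circular.

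The argument in \cite{BF1} (and the closely related exp-polynomial framework of \cite{BZ}) does not go through polynomial action of individual $L_n$'s.  It works directly with the family $\Omega_{\alpha,\beta}^{(l,\gamma)}$ as maps between two fixed finite-dimensional weight spaces and exploits recursions among these operators together with the uniform bound $K$; no identification of distinct weight spaces is required.
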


\begin{lemm}\label{lemm676225}
Let $M$ be a cuspidal $\overline{\W}[\Z^k]$-module. Then there exists $r\in \Z_+$ such that for all
$\alpha,\beta,\gamma\in\Z^k$ the following two  operators   $$\Omega_{\alpha,\beta}^{(r,\gamma)}=\sum_{{i}=0}^r(-1)^{i}{r\choose {i}}L_{\alpha-{i}\gamma}L_{\beta+{i}\gamma}\quad  \mathrm{and} \quad \widetilde{\Omega}_{\alpha,\beta}^{(r,\gamma)}=\sum_{{i}=0}^r(-1)^{i}{r\choose {i}}W_{\alpha-{i}\gamma}L_{\beta+{i}\gamma}$$  annihilate  $M$.
\end{lemm}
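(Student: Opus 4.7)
The plan is to combine Lemma \ref{lemm6765} with a direct extension of its underlying polynomial-interpolation argument. Since Lemma \ref{lemm6765} already supplies some $l_1\in\Z_+$ for which $\Omega_{\alpha,\beta}^{(l_1,\gamma)}$ annihilates $M$, it suffices to produce an $l_2\in\Z_+$ for which $\widetilde{\Omega}_{\alpha,\beta}^{(l_2,\gamma)}$ annihilates $M$, and then set $r=\max\{l_1,l_2\}$.

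The key input I would rely on is the polynomial-dependence fact underlying Lemma \ref{lemm6765}: on a cuspidal $\overline{\W}[\Z^k]$-module $M$ with uniform weight-space dimension bound $K$, for any fixed $\gamma,\mu,\beta\in\Z^k$ and any $v\in M_\mu$, the assignment $i\mapsto L_{\beta+i\gamma}v$ is polynomial in $i\in\Z$ (in a suitable matrix-entry sense, once weight spaces along $\gamma$ are identified) of degree bounded by some $D=D(K)$. This is precisely the mechanism making $\Omega_{\alpha,\beta}^{(l_1,\gamma)}$ annihilate $M$ for $l_1>2D$, via the vanishing of high-order forward differences of polynomials. Because $[L_m,W_n]=(n-m)W_{m+n}$ is formally identical to $[L_m,L_n]=(n-m)L_{m+n}$, the space $\bigoplus_\alpha\C W_\alpha$ is isomorphic to $\bigoplus_\alpha\C L_\alpha$ as a graded $\overline{\V}[\Z^k]$-module under the adjoint action. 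Since the Billig--Futorny argument exploits only this $\overline{\V}[\Z^k]$-covariance together with cuspidality, rerunning it with $W$'s in place of $L$'s gives the analogous polynomial-degree bound for the action of $W_{\alpha-i\gamma}$.

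Composing, $W_{\alpha-i\gamma}L_{\beta+i\gamma}v$ is then polynomial in $i$ of degree at most $2D$, so its $(2D+1)$-th forward finite difference vanishes identically. Taking $l_2=2D+1$ therefore forces $\widetilde{\Omega}_{\alpha,\beta}^{(l_2,\gamma)}$ to kill every weight vector of $M$, and hence all of $M$. The main obstacle will be justifying the $W$-version of the polynomial-degree statement: although the adjoint-isomorphism $L_\alpha\leftrightarrow W_\alpha$ makes the formal analogy transparent, one must re-examine Billig--Futorny's induction carefully to confirm that only the $\overline{\V}[\Z^k]$-covariance of the operator (and not any bracket between two copies of it) enters into the polynomial-degree bound, so that the transfer from $L$ to $W$ is legitimate.
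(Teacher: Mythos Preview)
Your route diverges from the paper's, and the step you flag as ``the main obstacle'' is a genuine gap you have not closed. The paper does \emph{not} rerun the Billig--Futorny argument with $W$'s in place of $L$'s. Instead it derives the annihilation by $\widetilde{\Omega}$ purely from the already-known annihilation by $\Omega$: one writes down a specific linear combination of six commutators $[\Omega^{(l,\gamma)}_{*,*},W_{*}]$ (each of which kills $M$ because $\Omega^{(l,\gamma)}_{*,*}$ does) and checks by direct expansion, using only $[L_a,W_b]=(b-a)W_{a+b}$, that this combination equals $2\gamma\,\widetilde{\Omega}^{(l+2,\gamma)}_{\alpha+s,\beta}$ acting on $M$. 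Setting $r=l+2$ then handles both operators simultaneously. This is entirely self-contained and never reopens the Billig--Futorny machinery.

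By contrast, your proposal rests on two assertions that are not established. First, the Billig--Futorny mechanism is not that ``$i\mapsto L_{\beta+i\gamma}v$ is polynomial in $i$ after identifying weight spaces''; what is actually proved is the vanishing of high finite differences of the \emph{composite} $L_{\alpha-i\gamma}L_{\beta+i\gamma}$ acting between two \emph{fixed} weight spaces, and the argument ultimately feeds on the classification of simple cuspidal Virasoro modules. There is no canonical identification of the intermediate spaces $M_{\lambda+\beta+i\gamma}$ as $i$ varies, so the phrase ``polynomial in a matrix-entry sense'' hides a real choice that must be made and controlled. Second, even granting some version of polynomiality for the $L$-factor, transferring it to the $W$-factor would require knowing in advance how $W_{\alpha}$ acts on an arbitrary (not simple) cuspidal $\overline{\W}[\Z^k]$-module---which is precisely what this section is working toward, so there is a real circularity risk. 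You correctly identify that one must verify the Billig--Futorny induction uses only $\overline{\V}[\Z^k]$-covariance of the left factor and never a self-bracket, but you do not carry this out; the paper's commutator trick simply bypasses the question.
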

\begin{proof}
Note that    $M$ is  also a    cuspidal module  for $\V[\Z^k]$. According to  Lemma \ref{lemm6765},  there exists
$l\in\Z_+$ such that  $\Omega_{\alpha,\beta}^{(l,\gamma)}M=0$ for all $\alpha,\beta,\gamma\in\Z^k$. It
follows from this  that
\begin{eqnarray}\nonumber
0&=&\Big(\sum_{{i}=0}^l(-1)^{i}{l\choose {i}}\big(L_{\alpha-({i}-1)\gamma}L_{\beta+({i}-1)\gamma}
-2L_{\alpha-{i}\gamma}L_{\beta+{i}\gamma}
+L_{\alpha-({i}+1)\gamma}L_{\beta+({i}+1)\gamma}\big)\Big)M
\\&=&\label{57uyt}\Big(\sum_{{i}=0}^{l+2}(-1)^{i}{l+2\choose {i}}L_{\alpha-({i}-1)\gamma}L_{\beta+({i}-1)\gamma}\Big)M.
\end{eqnarray}
Setting $r=l+2$ in \eqref{57uyt}, one gets  $\Omega_{\alpha,\beta}^{(r,\gamma)}M=0$ for all $\alpha,\beta,\gamma\in\Z^k$.
For any $s\in\Z^k$, from Lemma \ref{lemm6765}, we immediately  get
$[\Omega_{\alpha,\beta}^{(l,\gamma)},W_s]M=0$ for all $\alpha,\beta,\gamma\in\Z^k$.
Now for any $\alpha,\beta,s\in\Z^k$ and  $\gamma\in\Z^k\setminus\{0\}$,  we can compute that
\begin{eqnarray*}
0&=&\Big([\Omega_{\alpha,\beta-\gamma}^{(l,\gamma)},W_{s+\gamma}]
-2[\Omega_{\alpha,\beta}^{(l,\gamma)},W_{s}]
+[\Omega_{\alpha,\beta+\gamma}^{(l,\gamma)},W_{s-\gamma}]
\\&&-[\Omega_{\alpha+\gamma,\beta-\gamma}^{(l,\gamma)},W_{s}]
+2[\Omega_{\alpha+\gamma,\beta}^{(l,\gamma)},W_{s-\gamma}]-[\Omega_{\alpha+\gamma,\beta+\gamma}^{(l,\gamma)},
W_{s-2\gamma}]\Big)M
\\&=&\Big([\sum_{{i}=0}^l(-1)^{i}{l\choose {i}}L_{\alpha-{i}\gamma}L_{\beta-\gamma+{i}\gamma},W_{s+\gamma}]-2[\sum_{{i}=0}^l(-1)^{i}{l\choose {i}}L_{\alpha-{i}\gamma}L_{\beta+{i}\gamma},W_{s}]
\\&&+[\sum_{{i}=0}^l(-1)^{i}{l\choose {i}}L_{\alpha-{i}\gamma}L_{\beta+\gamma+{i}\gamma},W_{s-\gamma}]
-[\sum_{{i}=0}^l(-1)^{i}{l\choose {i}}L_{\alpha+\gamma-{i}\gamma}L_{\beta-\gamma+{i}\gamma},W_{s}]
\\&&+2[\sum_{{i}=0}^l(-1)^{i}{l\choose {i}}L_{\alpha+\gamma-{i}\gamma}L_{\beta+{i}\gamma},W_{s-\gamma}]
-[\sum_{{i}=0}^l(-1)^{i}{l\choose {i}}L_{\alpha+\gamma-{i}\gamma}L_{\beta+\gamma+{i}\gamma},W_{s-2\gamma}]\Big)M
\\&=&\Big(\sum_{{i}=0}^l(-1)^{i}{l\choose {i}}\big(
(s+(2-{i})\gamma-\beta)L_{\alpha-{i}\gamma}W_{\beta+s+{i}\gamma}
\\&&+(s+({i}+1)\gamma-\alpha)W_{\alpha+s+(1-{i})\gamma}L_{\beta+({i}-1)\gamma}
\\&&-2\big((s-\beta-{i}\gamma)L_{\alpha-{i}\gamma}W_{\beta+s+{i}\gamma}+(s-\alpha+{i}\gamma)
W_{\alpha+s-{i}\gamma}L_{\beta+{i}\gamma}\big)
\\&&
+(s-\beta-({i}+2)\gamma)L_{\alpha-{i}\gamma}W_{\beta+s+{i}\gamma}
+(s-\alpha+({i}-1)\gamma)W_{\alpha+s-({i}+1)\gamma}L_{\beta+({i}+1)\gamma}
\\&&
-(s-\beta-({i}-1)\gamma)L_{\alpha+(1-{i})\gamma}W_{\beta+s+({i}-1)\gamma}
-(s-\alpha+({i}-1)\gamma)W_{\alpha+s+(1-{i})\gamma}L_{\beta+({i}-1)\gamma}
\\&&+2\big(
(s-\beta-({i}+1)\gamma)L_{\alpha+(1-{i})\gamma}W_{\beta+s+({i}-1)\gamma}
+(s-\alpha+({i}-2)\gamma)W_{\alpha+s-{i}\gamma}L_{\beta+{i}\gamma}\big)
\\&&
-(s-\beta-({i}+3)\gamma)L_{\alpha+(1-{i})\gamma}W_{\beta+s+({i}-1)\gamma}
\\&&-(s-\alpha+({i}-3)\gamma)W_{\alpha+s-({i}+1)\gamma}L_{\beta+({i}+1)\gamma}\big)\Big)M
\\&=&\Big(2\gamma\sum_{{i}=0}^l(-1)^{i}{l\choose {i}}\big(W_{\alpha+s-({i}-1)\gamma}L_{\beta+({i}-1)\gamma}
-2W_{\alpha+s-{i}\gamma}L_{\beta+{i}\gamma}
+W_{\alpha+s-({i}+1)\gamma}L_{\beta+({i}+1)\gamma}\big)\Big)M
\\&=&\Big(2\gamma\sum_{{i}=0}^{l+2}(-1)^{i}{l+2\choose {i}}W_{\alpha+s-({i}-1)\gamma}L_{\beta+({i}-1)\gamma}\Big)M.
\end{eqnarray*}
Moreover, the module  $M$ can be annihilated by the operator  $\widetilde{\Omega}_{\alpha,\beta}^{(l+2,0)}$.
Then we conclude that $\widetilde{\Omega}_{\alpha,\beta}^{(r,\gamma)}M=\big(\sum_{{i}=0}^r(-1)^{i}{r\choose {i}}W_{\alpha-{i}\gamma}L_{\beta+{i}\gamma}\big)M=0$, where $r=l+2$ and all $\alpha,\beta,\gamma\in\Z^k$. The lemma holds.
\end{proof}

\begin{prop}\label{pro510}
Let $M$ be a cuspidal module over $\overline{\W}[\Z^k]$. Then the $\A$-cover $\widehat{M}$ of $M$ is also  a cuspidal $\A\overline{\W}[\Z^k]$-module.
\end{prop}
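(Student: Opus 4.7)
The plan is to establish three things in turn: that $\widehat{M}$ is a well-defined $\A\overline{\W}[\Z^k]$-module, that it carries a $\Z^k$-weight grading inherited from $M$, and finally that its weight spaces are uniformly bounded. Only the last is non-trivial.

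For the structure, I would verify directly from the formulas $\mathfrak{b}\phi(\mathfrak{a},w)=\phi([\mathfrak{b},\mathfrak{a}],w)+\phi(\mathfrak{a},\mathfrak{b}w)$ and $f\phi(\mathfrak{a},w)=\phi(f\mathfrak{a},w)$ that the Leibniz-type compatibility \eqref{511} holds; equivalently, that $\mathfrak{K}(M)$ is stable under both the $\A$-action $f\cdot(\mathfrak{a}\otimes w)=(f\mathfrak{a})\otimes w$ and the $\overline{\W}[\Z^k]$-action $\mathfrak{b}\cdot(\mathfrak{a}\otimes w)=[\mathfrak{b},\mathfrak{a}]\otimes w+\mathfrak{a}\otimes \mathfrak{b}w$. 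This is a direct use of Remark \ref{rem322}. For the weight grading, computing $L_0\phi(L_\alpha,w_\mu)=\phi([L_0,L_\alpha],w_\mu)+\phi(L_\alpha,L_0w_\mu)=(\alpha+\mu)\phi(L_\alpha,w_\mu)$ (and similarly for $W_\alpha$) shows that $\phi(L_\alpha,w_\mu),\phi(W_\alpha,w_\mu)\in\widehat{M}_{\alpha+\mu}$, so $\widehat{M}_\lambda$ is spanned by the images of $L_\alpha\otimes w$ and $W_\alpha\otimes w'$ with $w\in M_{\lambda-\alpha}$, $w'\in M_{\lambda-\alpha}$, as $\alpha$ ranges over $\Z^k$.

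The heart of the proof is transferring the identities of Lemma \ref{lemm676225} into relations inside $\widehat{M}$. Fix $r\in\Z_+$ as in that lemma. Since $\Omega_{n+\alpha,\beta}^{(r,\gamma)}M=0$ for \emph{every} $n\in\Z^k$, and the $t^n$ span $\A$, one obtains
\begin{equation*}
\sum_{i=0}^{r}(-1)^i\binom{r}{i}L_{\alpha-i\gamma}\otimes L_{\beta+i\gamma}w\in\mathfrak{K}(M),
\end{equation*}
so the corresponding sum vanishes in $\widehat{M}$; the same argument with $\widetilde{\Omega}$ gives the analogous relation with $L_{\alpha-i\gamma}$ replaced by $W_{\alpha-i\gamma}$. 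Since $\overline{\W}[\Z^k]M=M$ (using that $M$ is non-trivial), every $v\in M_\mu$ is a finite sum of vectors of the form $L_{\beta+i\gamma}w$ and $W_{\beta+i\gamma}w$. Fixing a $\Z$-basis $\epsilon_1,\dots,\epsilon_k$ of $\Z^k$ and applying the above relation successively with $\gamma=\epsilon_1,\dots,\epsilon_k$, I would show by a multi-index induction that every element of $\widehat{M}_\lambda$ is a linear combination of vectors $\phi(L_\alpha,v)$ and $\phi(W_\alpha,v')$ in which each coordinate of $\alpha$ lies in a fixed interval of length at most $r$. Because $\lambda-\alpha$ then lies in a box of bounded size and $\dim M_{\lambda-\alpha}\le K$ by cuspidality of $M$, the total count gives $\dim\widehat{M}_\lambda\le 2r^kK$, a bound independent of $\lambda$.

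The main obstacle I expect is the multi-index reduction in the last step: the relation couples a shift in the first slot by $\gamma$ with a shift of the $\overline{\W}[\Z^k]$-element acting on the second slot, so the induction must be carried out coordinate by coordinate while keeping track of which vectors in $M$ appear in the second slot. A careful choice of the induction parameter (lexicographic on the coordinates of $\alpha$ relative to a fixed reference point $\alpha_0$) together with the identity $W_\alpha(t^n v)=t^n W_\alpha v$ from Lemma \ref{qas3.3} is what I expect to make the bookkeeping go through cleanly for both the $L$-spanning vectors and the $W$-spanning vectors simultaneously.
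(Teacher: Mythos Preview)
Your overall strategy matches the paper's: transfer the annihilating operators of Lemma~\ref{lemm676225} into relations in $\widehat{M}$ and use them to reduce every generator $\phi(L_\alpha,v),\phi(W_\alpha,v)$ of a fixed weight space to ones with $\alpha$ in a bounded box. There is, however, a gap in your reduction step. The relations you extract read
\[
\sum_{i=0}^{r}(-1)^i\binom{r}{i}\phi(L_{\alpha-i\gamma},L_{\beta+i\gamma}w)=0,\qquad
\sum_{i=0}^{r}(-1)^i\binom{r}{i}\phi(W_{\alpha-i\gamma},L_{\beta+i\gamma}w)=0,
\]
so in both the second argument is $L_{\beta+i\gamma}w$. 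To reduce $\phi(X_\alpha,v)$ for an \emph{arbitrary} $v$ you propose to write $v$ as a finite sum of terms $L_\beta w$ and $W_\beta w$ via $\overline{\W}[\Z^k]M=M$. But then you must also reduce $\phi(W_\alpha,W_\beta w)$, and neither $\Omega$ nor $\widetilde{\Omega}$ furnishes a $WW$-type annihilator; obtaining one would require a further argument of the same flavour as the proof of Lemma~\ref{lemm676225}. The paper sidesteps this entirely: since $L_0$ acts on $M_{\beta+q}$ as the nonzero scalar $g+\beta+q$ whenever $\beta+q\neq0$, one writes $w=(g+\beta+q)^{-1}L_0w$ and applies the relation with second index $0$, yielding
\[
\phi(X_{\alpha-q},w)=-\sum_{j=1}^{r}(-1)^j\binom{r}{j}\phi(X_{\alpha-q-j\epsilon_i},L_{j\epsilon_i}v)\qquad(X\in\{L,W\}).
\]
The induction is then on the norm $\|q\|=\sum_i|q_i|$: choose $i$ with $|q_i|>r/2$, and every term on the right has strictly smaller norm. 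This is simpler than a lexicographic scheme and never forces a $W$ into the second slot.

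One further point: your appeal to Lemma~\ref{qas3.3} is misplaced. That lemma is about cuspidal $\A\overline{\W}[\Z^k]$-modules, whereas here $M$ carries no $\A$-action at all; the identity $W_\alpha(t^nv)=t^n(W_\alpha v)$ is not even meaningful for $M$ and plays no role in the proof.
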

\begin{proof}
Let $M_\lambda$ be a weight space with weight $\lambda\in\Z^k$.
For $\alpha\in\Z^k$, we denote
$$\phi(L_\alpha\cup W_\alpha,M_\lambda)=\left\{\phi(L_\alpha,w),\phi(W_\alpha,w)\ \bigg|\  w\in M_\lambda\right\}\subset\widehat{M}.$$
 By considering the weight spaces of $M$,   the space
$\phi(L_\alpha\cup W_\alpha,M_\lambda)$  is finite-dimensional.

Since   $\widehat{M}$ is an $\A$-module,  we see that one of its weight spaces is finite-dimensional.
 For a fixed   weight $\beta\in\Z^k$, we will show that $\widehat{M}_\beta$
is finite-dimensional. Obviously,  the space $\widehat{M}_\beta$
is spanned by the
set $$\Big(\bigcup_{\gamma\in\Z^k}\phi(L_{\beta-\gamma},M_\gamma)\Big)\cup
\Big(\bigcup_{\gamma\in\Z^k}\phi(W_{\beta-\gamma},M_\gamma)\Big).$$
Define a norm on $\Z^k$ as follows
$$\|\alpha\|=\sum_{{i}=1}^k|\alpha_{i}|,$$
where $\alpha=\sum_{{i}=1}^k\alpha_{i}\epsilon_{i}$.
By  Lemma  \ref{lemm676225}, there exists $r\in\N$ such that  for all $\alpha,\beta,\gamma\in\Z^k$  the  following  two operators
$$\Omega_{\alpha,\beta}^{(r,\gamma)}=\sum_{{i}=0}^r(-1)^{i}{r\choose {i}}L_{\alpha-{i}\gamma}L_{\beta+{i}\gamma}\quad \mathrm{and}\quad \widetilde{\Omega}_{\alpha,\beta}^{(r,\gamma)}=\sum_{{i}=0}^r(-1)^{i}{r\choose {i}}W_{\alpha-{i}\gamma}L_{\beta+{i}\gamma}$$
annihilate   $M$, namely, $\Omega_{\alpha,\beta}^{(r,\gamma)}v= \widetilde{\Omega}_{\alpha,\beta}^{(r,\gamma)}v=0$ for all $\alpha,\beta,\gamma\in\Z^k,v\in M$.
Hence, $\Omega_{\alpha,\beta}^{(r,\gamma)}v$ and $\widetilde{\Omega}_{\alpha,\beta}^{(r,\gamma)}v$ are both in $\mathfrak{K}(M)$.
\begin{clai}\label{clai11}
For any $\alpha,\beta\in\Z^k$, $\widehat{M}_{\alpha+\beta}$ is equal to
$$\mathrm{span} \left\{\phi(L_{\alpha+\beta}\cup W_{\alpha+\beta},M_0),
\phi(L_{\alpha-\gamma},M_{\beta+\gamma}),
\phi(W_{\alpha-\gamma},M_{\beta+\gamma})\ \bigg|\  \gamma\neq-\beta,\|\gamma\|\leq\frac{kr}{2}\right\}.
$$
\end{clai}
For all $q\in\Z^k$ and $w\in M_{\beta+q}$, we have $\phi(L_{\alpha-q},w)$ and $\phi(W_{\alpha-q},w)$ in
$\widehat{M}_{\alpha+\beta}$.
Now we  prove this claim by induction on $\|q\|$.
 If $|q_{i}|\leq\frac{r}{2}$ for all ${i}\in\{1,\ldots,k\}$,
the result clears. On the other hand, suppose $|q_{i}|>\frac{r}{2}$  for some ${i}\in\{1,\ldots,k\}$.
 We may assume $q_{i}<-\frac{r}{2}$, and  the other case $q_{i}>-\frac{r}{2}$ can be proved by the similar method.
It is easy to see that  $\|q+{j}\epsilon_{i}\|<\|q\|$ for all ${j}\in\{1,\ldots,r\}$. We only need to give the proof
for $\beta+q\neq0$. From   the action of  $L_0$ on $M_{\beta+q}$ is a nonzero scalar, we can write
 $w=L_0v$, where $v\in M_{\beta+q}$.
We  will verify that
$$\sum_{{j}=0}^r(-1)^{j}{r\choose {j}}\phi(L_{\alpha-q-{j}\epsilon_{i}},L_{{j}\epsilon_{i}}v)= \sum_{{j}=0}^r(-1)^{j}{r\choose {j}}\phi(W_{\alpha-q-{j}\epsilon_{i}},L_{{j}\epsilon_{i}}v)=0$$  in $\widehat{M}$.
 Based on Definition \ref{def5777} and Lemma \ref{lemm676225},
for any $m\in\Z^k$  we deduce
\begin{eqnarray*}
&&\sum_{{j}=0}^r(-1)^{j}{r\choose {j}}\phi(L_{\alpha-q-{j}\epsilon_{i}},L_{{j}\epsilon_{i}}v)(t^m)
\\&=&\sum_{{j}=0}^r(-1)^{j}{r\choose {j}}L_{\alpha+m-q-{j}\epsilon_{i}}L_{{j}\epsilon_{i}}v=\Omega_{\alpha+m-q,0}^{(r,\epsilon_{i})}v=0
\end{eqnarray*}
and
\begin{eqnarray*}
&&\sum_{{j}=0}^r(-1)^{j}{r\choose {j}}\phi(W_{\alpha-q-{j}\epsilon_{i}},L_{{j}\epsilon_{i}}v)(t^m)
\\&=&\sum_{{j}=0}^r(-1)^{j}{r\choose {j}}W_{\alpha+m-q-{j}\epsilon_{i}}L_{{j}\epsilon_{i}}v
=\widetilde{\Omega}_{\alpha+m-q,0}^{(r,\epsilon_{i})}v=0.
\end{eqnarray*}
Thus, one has
\begin{eqnarray}
&&\label{eq588}\phi(L_{\alpha-q},w)=-\sum_{{j}=1}^r(-1)^{j}{r\choose {j}}\phi(L_{\alpha-q-{j}\epsilon_{i}},L_{{j}\epsilon_{i}}v),
\\&&\label{eq599}
\phi(W_{\alpha-q},w)=-\sum_{{j}=1}^r(-1)^{j}{r\choose {j}}\phi(W_{\alpha-q-{j}\epsilon_{i}},L_{{j}\epsilon_{i}}v).
\end{eqnarray}
By induction assumption the right hand sides of \eqref{eq588} and \eqref{eq599}  belong  to $\widehat{M}_{\alpha+\beta}$, and so do
$\phi(L_{\alpha-q},w)$, $\phi(W_{\alpha-q},w)$. This   proves the claim.
Therefore, $\widehat{M}_{\alpha+\beta}$ is finite-dimensional. The proposition follows.
\end{proof}

The Claim \ref{clai11} can also be described as follows.
\begin{remark}
For  $\alpha,\beta,q\in\Z^k$ and $\beta+q\neq0,w\in M_{\beta+q}$, we get
\begin{eqnarray*}
&&\phi(L_{\alpha-q},w)\in \sum_{\|\gamma\|\leq\frac{kr}{2}}\phi(L_{\alpha-\gamma},M_{\beta+\gamma})+\mathfrak{K}(M),
\\&&
\phi(W_{\alpha-q},w)\in\sum_{\|\gamma\|\leq\frac{kr}{2}}\phi(W_{\alpha-\gamma},M_{\beta+\gamma})+\mathfrak{K}(M).
\end{eqnarray*}
\end{remark}
 Now we give a  classification for all simple cuspidal  $\overline{\W}[\Z^k]$-modules.
\begin{theo}\label{the511}
Any simple cuspidal  $\overline{\W}[\Z^k]$-module is isomorphic to a module of intermediate series $\overline{M}(g,h;\Z^k)$ for some $g,h\in\C$.
\end{theo}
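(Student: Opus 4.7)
The plan is to reduce to Theorem \ref{the566} by passing through the $\A$-cover construction. Given a simple cuspidal $\overline{\W}[\Z^k]$-module $M$, I would form its $\A$-cover $\widehat{M}=(\overline{\W}[\Z^k]\otimes M)/\mathfrak{K}(M)$ together with the canonical $\overline{\W}[\Z^k]$-epimorphism $\Theta:\widehat{M}\to M$; surjectivity of $\Theta$ follows from $\overline{\W}[\Z^k]M=M$, which holds because $M$ is simple and non-trivial. By Proposition \ref{pro510}, $\widehat{M}$ is a cuspidal $\A\overline{\W}[\Z^k]$-module, so in particular all of its weight spaces are finite-dimensional.

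The technical backbone is the observation that any $\A\overline{\W}[\Z^k]$-submodule $N\subseteq\widehat{M}$ contained in $\ker\Theta$ must vanish. Indeed, an arbitrary element of $N$ can be written as $x=\sum_i \mathfrak{a}_i\otimes w_i+\mathfrak{K}(M)$, and the $\A$-stability of $N$ yields $fx=\sum_i(f\mathfrak{a}_i)\otimes w_i+\mathfrak{K}(M)\in N$ for every $f\in\A$. The assumption $\Theta(fx)=\sum_i(f\mathfrak{a}_i)w_i=0$ for all $f\in\A$ is precisely the defining condition of $\mathfrak{K}(M)$, forcing $x=0$ in $\widehat{M}$. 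With this in hand, I would apply Zorn's lemma to the collection of $\A\overline{\W}[\Z^k]$-submodules $T\subseteq\widehat{M}$ satisfying $\Theta(T)=M$, and extract a minimal such $T$. Zorn applies because along any chain $T_1\supseteq T_2\supseteq\cdots$ in this collection, finite-dimensionality of the weight spaces of $\widehat{M}$ makes each sequence $(T_i)_\lambda$ eventually constant, so the intersection $\bigcap_i T_i$ retains the property that $\Theta$ remains surjective on every weight space of $M$; hence $\Theta(\bigcap_i T_i)=M$.

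For such a minimal $T$, every proper $\A\overline{\W}[\Z^k]$-submodule $T'\subsetneq T$ has $\Theta(T')$ equal to $0$ or $M$ by simplicity of $M$; minimality of $T$ excludes the value $M$, so $\Theta(T')=0$, and the observation above then forces $T'=0$. Therefore $T$ is a simple cuspidal $\A\overline{\W}[\Z^k]$-module, and Theorem \ref{the566} gives $T\cong\overline{M}(g,h;\Z^k)$ for some $g,h\in\C$. Restricting $\Theta$ to $T$ produces a nonzero $\overline{\W}[\Z^k]$-homomorphism $\overline{M}(g,h;\Z^k)\to M$, which must be an isomorphism because $\overline{M}(g,h;\Z^k)$ is already simple as a $\overline{\W}[\Z^k]$-module by construction. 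I expect the main obstacle to be isolating a simple $\A\overline{\W}[\Z^k]$-subquotient of $\widehat{M}$ that maps isomorphically onto $M$: the cuspidality provided by Proposition \ref{pro510} is precisely what makes Zorn's lemma run, while the vanishing observation is what transfers simplicity from the $\overline{\W}[\Z^k]$-picture to the $\A\overline{\W}[\Z^k]$-picture.
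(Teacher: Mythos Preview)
Your argument is correct and follows the same overall strategy as the paper: form the $\A$-cover $\widehat{M}$, invoke Proposition~\ref{pro510} to see it is cuspidal, isolate a simple $\A\overline{\W}[\Z^k]$-piece, and apply Theorem~\ref{the566}. The only tactical difference is in how that simple piece is extracted. The paper takes a finite composition series $0=\widehat{M}_0\subset\cdots\subset\widehat{M}_{\widehat{c}}=\widehat{M}$ and passes to the subquotient $\widehat{M}_d/\widehat{M}_{d-1}$ for the least $d$ with $\Theta(\widehat{M}_d)\neq0$. You instead prove the sharper fact that $\ker\Theta$ contains no nonzero $\A$-stable subspace (equivalently, in the paper's notation, $d=1$), and then use Zorn plus finite-dimensionality of weight spaces to locate a minimal, hence simple, $\A\overline{\W}[\Z^k]$-\emph{submodule} $T$ with $\Theta(T)=M$. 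Your route is marginally more informative in that it shows the simple piece sits inside $\widehat{M}$ rather than merely being a subquotient; the paper's route is shorter since the existence of a finite composition series follows immediately from cuspidality and makes the Zorn step unnecessary.
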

\begin{proof}
Assume that  $M$ is a simple cuspidal $\overline{\W}[\Z^k]$-module.   It is clear that $\overline{\W}[\Z^k]M=M$. Then there exist  an $\A$-cover $\widehat{M}$ of $M$ with a surjective homomorphism   $\Theta:\widehat{M}\rightarrow M$.
It follows from Proposition \ref{pro510} that $\widehat{M}$ is a cuspidal $\A\overline{\W}[\Z^k]$-module. Hence, we can  consider the composition series
$$0=\widehat{M}_0\subset\widehat{M}_1\subset\cdots\subset\widehat{M}_{\widehat{c}}=\widehat{M}$$
with the quotients $\widehat{M}_{\widehat{i}+1}/\widehat{M}_{\widehat{i}}$ being simple $\A\overline{\W}[\Z^k]$-modules. Let $d$ be the smallest
integer such that $\Theta(\widehat{M}_d)\neq0$. Then by the  simplicity of  $M$, we obtain
$\Theta(\widehat{M}_d)=M$ and $\Theta(\widehat{M}_{d-1})=0$.
So we get an $\A\overline{\W}[\Z^k]$-epimorphism
$$\overline{\Theta}:\widehat{M}_{d}/\widehat{M}_{d-1}\rightarrow M.$$
Now from   Theorem
\ref{the566}, $\widehat{M}_{d}/\widehat{M}_{d-1}$  is isomorphic to a module of   intermediate series $\overline{M}(g,h;\Z^k)$ for some $g,h\in\C$.
  We complete  the proof.
\end{proof}

Based on the representation
theory of  $\V[\Z^k]$ studied in \cite{S1,S2}, we see that the action of $C$
on any simple cuspidal $\W[\Z^k]$-modules is trivial. Therefore, the category
of simple cuspidal modules over $\W[\Z^k]$  is  equivalent to the category of simple cuspidal modules over $\overline{\W}[\Z^k]$. Then  Theorem \ref{the511} can be described as follows.
\begin{theo}\label{the513}
Let $M$ ba a simple  cuspidal module over $\W[\Z^k]$. Then $M$   is isomorphic to a simple quotient of intermediate
series module $M(g,h;\Z^k)$ for some $g,h\in\C$.
\end{theo}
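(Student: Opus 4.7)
The plan is to reduce this statement to Theorem \ref{the511}, which has already handled the centerless case, by showing that the central element $C$ must act trivially on any simple cuspidal $\W[\Z^k]$-module. Observe that $\W[\Z^k]$ contains the high rank Virasoro algebra $\V[\Z^k]$ as a Lie subalgebra, and the central element $C$ of $\W[\Z^k]$ coincides with the central element of $\V[\Z^k]$. Consequently, if $M$ is any simple cuspidal $\W[\Z^k]$-module, then its restriction $M\!\downarrow_{\V[\Z^k]}$ is a weight $\V[\Z^k]$-module whose weight multiplicities are still uniformly bounded, i.e.\ a cuspidal $\V[\Z^k]$-module.

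First I would invoke the results on simple cuspidal representations of the high rank Virasoro algebra due to Su (cited as \cite{S1,S2}), which assert that on any cuspidal $\V[\Z^k]$-module the central element $C$ necessarily acts as $0$. Applying this to the restriction $M\!\downarrow_{\V[\Z^k]}$ forces $CM=0$. Hence the $\W[\Z^k]$-action on $M$ descends through the quotient $\overline{\W}[\Z^k]=\W[\Z^k]/\C C$, and $M$ becomes a simple cuspidal module over $\overline{\W}[\Z^k]$.

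Next I would apply Theorem \ref{the511} directly to this induced $\overline{\W}[\Z^k]$-structure: the module $M$ is isomorphic, as an $\overline{\W}[\Z^k]$-module, to some intermediate series module $\overline{M}(g,h;\Z^k)$, which by definition (cf.\ Definition \ref{def22} and the discussion following it) is the unique non-trivial simple subquotient of $M(g,h;\Z^k)$ for some $g,h\in\C$. Inflating back along the projection $\W[\Z^k]\twoheadrightarrow\overline{\W}[\Z^k]$, where $C$ acts as $0$ on both sides, yields the desired isomorphism of $\W[\Z^k]$-modules, so $M$ is isomorphic to a simple quotient of $M(g,h;\Z^k)$.

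The only non-trivial input in this plan is the triviality of the central action on cuspidal modules, and this has been outsourced to the existing literature \cite{S1,S2} on $\V[\Z^k]$; the main obstacle would therefore have been proving that input from scratch, but for the purposes of this theorem it can be cited directly. Once $CM=0$ is established, the remainder of the argument is a purely formal passage through the quotient, and the classification of Theorem \ref{the511} finishes the proof.
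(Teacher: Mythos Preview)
Your proposal is correct and follows exactly the paper's own argument: the paper likewise cites \cite{S1,S2} to conclude that $C$ acts trivially on any simple cuspidal $\W[\Z^k]$-module, so that the categories of simple cuspidal modules over $\W[\Z^k]$ and over $\overline{\W}[\Z^k]$ coincide, and then invokes Theorem~\ref{the511}. One small refinement: Su's results apply to \emph{simple} cuspidal $\V[\Z^k]$-modules, whereas the restriction $M\!\downarrow_{\V[\Z^k]}$ need not be simple; but since $C$ is central in $\W[\Z^k]$ it already acts as a scalar $c$ on the simple module $M$, and passing to any simple $\V[\Z^k]$-subquotient then forces $c=0$.
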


\section{Non-cuspidal modules}
In this section,  we    determine  the simple weight modules with finite-dimensional weight spaces
which are not cuspidal for the
higher rank $W$-algebras $W(2,2)$. These modules are   called generalized highest weight modules  and defined in Section \ref{www9998}.

The result  of high rank Virasoro algebras of Theorem \ref{411} plays  a key role  in the following proof.
\begin{theo}\label{the611}
Let  $M$ be a simple Harish-Chandra module over $\W[\Z^k]$.     Then $M$   is either
a cuspidal module, or   isomorphic to  some $P_{\g,\mu,\K}^{\W[\Z^k]}$,
where $\mu\in\Z^k\setminus\{0\}$, $\mathfrak{g}$ is a subgroup of $\Z^k$ such that $\Z^k=\mathfrak{g}\oplus\Z\mu$ and $\K$ is a
non-trivial simple cuspidal $\W[\Z^k]_{\mathfrak{g}}$-module.
\end{theo}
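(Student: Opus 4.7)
The plan is to reduce Theorem \ref{the611} to the three main tools already in hand: Theorem \ref{411} (classification over $\V[\Z^k]$), Theorem \ref{the311} (cut modules have a generalized highest weight vector structure for cut support), and Theorem \ref{the513} (simple cuspidal modules over $\W[\Z^k]$). Let $M$ be a simple Harish-Chandra module over $\W[\Z^k]$. If $M$ is cuspidal we are done by Theorem \ref{the513}, so assume from now on that $M$ is not cuspidal; our job is to realize $M$ as $P_{\g,\mu,\K}^{\W[\Z^k]}$.

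The first main step is to produce a generalized highest weight vector in $M$. Restricting $M$ to the Virasoro subalgebra $\V[\Z^k]$ gives a Harish-Chandra (but not necessarily simple) $\V[\Z^k]$-module; since $\mathrm{Supp}(M)$ carries unbounded weight multiplicities, one may choose a simple subquotient $N$ of the restriction that is not of intermediate series. By Theorem \ref{411}, $N \cong P_{\g,\mu,\K_0}^{\V[\Z^k]}$ for some $\mu \in \Z^k \setminus \{0\}$ and complementary subgroup $\g$ with $\Z^k = \g \oplus \Z\mu$, so the support of $N$ is contained in a proper half-lattice in the $\mu$-direction. Pulling back along a $\Z$-basis adapted to $\mu$, one produces a weight vector $v \in M$ that is annihilated by all $L_\alpha$ with sufficiently large $\mu$-coordinate; using the operator identities $(\beta-\alpha)W_{\alpha+\beta} = [L_\alpha, W_\beta]$ and the bracket $[L_m, W_n] = (n-m)W_{m+n}$, one then bootstraps from the Virasoro annihilation to full $\W[\Z^k]$ annihilation, i.e.\ $\W[\Z^k]_\alpha v = 0$ whenever the $\mu$-coordinate of $\alpha$ is sufficiently large. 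Thus $v$ is a generalized highest weight vector in the sense of Section \ref{www9998}. This bootstrap is the step I expect to be the main obstacle, since one has to ensure that the $W_\alpha$-part is also killed uniformly in the chosen direction rather than only on the Virasoro part.

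With such a $v$ at hand, Theorem \ref{the311} (applied to $\H = \overline{\W}[\Z^k]$, which satisfies the needed bracket hypothesis, together with the fact that $C$ acts centrally) shows that $M$ is a cut module: there exist $\lambda \in \mathrm{Supp}(M)$ and $\tau \in \mathbb{R}^k \setminus \{0\}$ such that $\mathrm{Supp}(M) \subseteq \lambda + \beta + \Z^k_{\leq 0}(\tau)$. Choose $\mu$ so that $\tau(\mu) > 0$ and a complementary $\g$ with $\Z^k = \g \oplus \Z\mu$; the slice $\lambda_0 + \g$ sitting at the maximum of the $\tau$-height is then the ``top" slice of the support.

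Finally, define $\K = \bigoplus_{\alpha \in \g} M_{\lambda_0 + \alpha}$. Maximality of the top slice forces $\W[\Z^k]_\g^+ \K = 0$, so $\K$ is a $(\W[\Z^k]_\g + \W[\Z^k]_\g^+)$-module, and since $M$ is Harish-Chandra with bounded weight multiplicities on this single slice (the height is fixed), $\K$ is in fact a cuspidal $\W[\Z^k]_\g$-module. Frobenius reciprocity supplies a nonzero $\W[\Z^k]$-homomorphism $V_{\g,\mu,\K} \to M$; by the simplicity of $M$ this map is surjective and its kernel contains the unique maximal submodule of $V_{\g,\mu,\K}$, yielding $M \cong P_{\g,\mu,\K}^{\W[\Z^k]}$. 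A standard argument then shows $\K$ must itself be simple (otherwise a proper $\W[\Z^k]_\g$-submodule of $\K$ would induce a proper $\W[\Z^k]$-submodule of $M$), and non-trivial (otherwise $M$ would be trivial). This completes the proof modulo the bootstrap step flagged above.
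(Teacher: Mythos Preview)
Your overall plan matches the paper's, but the construction of the generalized highest weight vector has a real gap---in fact two. First, $N$ is only a $\V[\Z^k]$-\emph{subquotient} $N'/N''$ of $M$, so a highest weight vector of $N$ lifts to some $v\in M$ with $L_\alpha v\in N''$, not $L_\alpha v=0$; since $N''$ need not be $\W[\Z^k]$-stable, no genuine annihilation in $M$ itself follows. Second, even granting $L_\alpha v=0$ for $\alpha$ in a cone, the bootstrap you flag does not close: $(\beta-\alpha)W_{\alpha+\beta}v=L_\alpha W_\beta v-W_\beta L_\alpha v$ reduces to $\frac{1}{\beta-\alpha}L_\alpha(W_\beta v)$, and nothing controls $W_\beta v$; iterating only produces more uncontrolled terms.

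The paper avoids both issues by producing the vector directly in $M$ via a pigeonhole argument. Since $M$ is not cuspidal one can find a weight space $M_{-\widetilde{\alpha}}$ with $\dim M_{-\widetilde{\alpha}}>2k\big(\dim M_{\epsilon_j}+\sum_{i\ne j}\dim M_{\epsilon_j+\epsilon_i}\big)$; then the $2k$ linear maps $L_{\xi_i},W_{\xi_i}\colon M_{-\widetilde{\alpha}}\to M_{-\widetilde{\alpha}+\xi_i}$ (for a suitably chosen $\Z$-basis $\xi_1,\dots,\xi_k$) share a nonzero kernel vector $w$, which is killed by all $L_{\xi_i}$ and $W_{\xi_i}$ simultaneously. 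Theorem~\ref{the311} then applies to $\overline{\W}[\Z^k]$ with no bootstrap needed, and the $\V[\Z^k]$-subquotient is invoked only \emph{after} $M$ is known to be cut, to identify $\g=\Z^k_\tau$ and the direction $\mu$. Your endgame (top slice, surjection from the generalized Verma module, simplicity of $\K$) is fine and more explicit than the paper's; note however that ``Harish-Chandra'' gives finite, not uniformly bounded, weight spaces on the slice, so your stated justification for $\K$ cuspidal is incomplete (the paper also only asserts this).
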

\begin{proof}
Suppose that $M$ is not a cuspidal module over  $\W[\Z^k]$. Let us  recall  that $M=\bigoplus_{\alpha\in \Z^k}M_{\alpha}$ where $M_{\alpha}=\{w\in M \mid L_0w=(g+\alpha)w\}$. By Theorem \ref{the44378}, we see that the statement holds for any $\Z^k$ of
  $k=1$.

Now suppose $k\geq2$. View $M$ as a $\V[\Z^k]$-module. Then based on  Theorem \ref{411},
we obtain that  the  action of the central element $C$  on $M$ is  trivial. Hence $M$ can be seen
as a $\overline{\W}[\Z^k]$-module.  We fix a $\Z$-basis of $\Z^k$, which is also suitable for $\Z^k$. For any $\sigma\in \mathbb{R}^k$ and $g \in\Z^k$, we have the inner product $(\sigma|g)$. It follows from $[\overline{\W}[\Z^k]_\alpha, \overline{\W}[\Z^k]_\beta] = \overline{\W}[\Z^k]_{\alpha+\beta}$ that Theorem \ref{the311} can be applied
to $\Z^k$.

Since $M$ is not cuspidal, we can find some rank $k-1$ direct summand
$\widetilde{\Z^k}$ of $\Z^k$ such that $M_{\widetilde{\Z^k}}$ is not cuspidal. Without loss of generality,
we may assume that $\widetilde{\Z^k}$ is spanned by
 $\{\epsilon_1, \epsilon_{2},\ldots,\epsilon_{k}\}\setminus\{\epsilon_{j}\}$, where
 $j\in\{1,2,\ldots,k\}$. Then there exists some
$\widetilde{\alpha}\in \widetilde{\Z^k}$ such that
\begin{eqnarray}\label{ga611}
\mathrm{dim}(M_{-\widetilde{\alpha}})> 2k\big(\mathrm{dim}(M_{\epsilon_{j}})+
\sum_{{i}=1,{i}\neq j}^{k}\mathrm{dim}(M_{\epsilon_{j}+\epsilon_{{i}}})\big).
\end{eqnarray}
For simplicity, we denote $\xi_{j}=\widetilde{\alpha}+\epsilon_{j}$ and $\xi_{{i}}=\widetilde{\alpha}+\epsilon_{j}+\epsilon_{{i}}$ for any ${{i}}\in\{1,2,\ldots,k\}\setminus \{j\}$.
Then it is easy to check that  the linear transformation sending each $\epsilon_{{i}}$ to $\xi_{{i}}$
for any ${{i}}\in\{1,\ldots,k\}$, has determinant $1$ and therefore $\{\xi_{{i}}\mid {i}=1,\ldots,k\}$ is also a $\Z$-basis of $\Z^k$. According to  \eqref{ga611}, there exists some
nonzero element $w\in M_{-\widetilde{\alpha}}$ such that $L_{\xi_{{i}}}w=W_{\xi_{{i}}}w=0$ for all ${{i}}\in\{1,\ldots,k\}$. Thus,
$w$ is a generalized highest weight vector associated with the $\Z$-basis $\{\xi_{{i}} \mid {{i}}=1,\ldots,k\}$.

It is clear that  $M$ is neither dense nor trivial. From Theorem \ref{the311},   there exist
some $\beta\in\Z^k$ and $\tau \in \mathbb{R}^k\setminus \{0\}$ such that $\mathrm{Supp}(M) \subseteq g+\beta+ {\Z^k}^{(\tau)}_{\leq0}$.  Consider $M$ as a $\V[\Z^k]$-module. Then $M$
has a simple non-trivial $\V[\Z^k]$-subquotient, and we denote it by  $\overline{M}^{\V}$, which is not cuspidal. By Theorem \ref{411}, we know that $\overline{M}^{\V}\cong P^{\V[\Z^k]}_{\mathfrak{g},\mu,\mathcal{K}}$ for some
nonzero $\mu\in\Z^k$, subgroup $\mathfrak{g}$ of $\Z^k$ with $\Z^k=\mathfrak{g}\oplus\Z\mu$ and $\mathcal{K}$ being a  simple
 intermediate series module  over $\V[\mathfrak{g}]$. Write
$\Z^k_\tau=\{\alpha \in \Z^k\mid (\tau|\alpha)=0\}$. In particular, we have
$$g -\widetilde{c}\mu+\mathfrak{g} \subseteq \mathrm{Supp}(\overline{M}^{\V}) \subseteq \mathrm{Supp}(M) \subseteq g+\beta+{\Z^k}^{(\tau)}_{\leq0}$$
for sufficiently large $\widetilde{c}\in \Z_+$.  This gives $\mathfrak{g}=\Z^k_\tau$ and $(\tau|\mu)>0$.

We set that $\widetilde{c}_0\in\Z$ is the maximal number such that $\mathcal{K}=M_{g+\widetilde{c}_0\mu+\mathfrak{g}}\neq0$. Hence,
$\W[\Z^k]^+\mathcal{K}=0$. Then it follows from the simplicity  of $\W[\Z^k]$-module $M$   that the simple $\W[\g]$-module  $\mathcal{K}$   and  $M=P^{\W[\Z^k]}_{\mathfrak{g},\mu,\mathcal{K}}$. At last,  note that
$\mathcal{K}$ is non-trivial and cuspidal. This proves the theorem.
\end{proof}
Based on  Theorems \ref{the513} and  \ref{the611}, we give a classification of simple Harish-Chandra modules over  the higher rank $W$-algebra $W(2,2)$.
\begin{theo}\label{the622}
Assume that $M$ is a non-trivial simple Harish-Chandra module over $\W[\Z^k]$ for some
$k\in\Z_+$.
\begin{itemize}
\item[\rm(1)] If $M$ is  cuspidal, then $M$ is isomorphic to some $\overline{M}(g,h;\Z^k)$
for some $g,h\in \C$;
\item[\rm(2)] If $M$ is not cuspidal, then $M$ is isomorphic to
$P^{\W[\Z^k]}_{\mathfrak{g},\mu,\mathcal{K}}$ for some $\mu\in\Z^k\setminus\{0\}$, a subgroup  $\mathfrak{g}$ of $\Z^k$ with $\Z^k=\mathfrak{g}\oplus\Z{\mu}$ and  a non-trivial simple intermediate series   $\W[\mathfrak{g}]$-module  $\mathcal{K}$.
\end{itemize}
\end{theo}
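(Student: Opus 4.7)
The plan is to assemble this classification from the two main intermediate results already established, namely Theorem \ref{the611} (the dichotomy into cuspidal versus generalized highest weight) and Theorem \ref{the513} (the classification of simple cuspidal modules); the present statement is essentially a bookkeeping consolidation of these. Given a non-trivial simple Harish-Chandra module $M$ over $\W[\Z^k]$, Theorem \ref{the611} at once asserts that either $M$ is cuspidal, or $M \cong P^{\W[\Z^k]}_{\mathfrak{g},\mu,\mathcal{K}}$ for some $\mu \in \Z^k\setminus\{0\}$, a subgroup $\mathfrak{g}$ of $\Z^k$ with $\Z^k = \mathfrak{g} \oplus \Z\mu$, and a non-trivial simple cuspidal $\W[\Z^k]_{\mathfrak{g}}$-module $\mathcal{K}$.

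For part (1), one simply invokes Theorem \ref{the513}: a non-trivial simple cuspidal $\W[\Z^k]$-module is isomorphic to a simple quotient of some $M(g,h;\Z^k)$, which by non-triviality of $M$ is exactly $\overline{M}(g,h;\Z^k)$ for some $g,h \in \C$. For part (2), the only remaining step is to refine $\mathcal{K}$ from ``simple cuspidal'' to ``simple intermediate series.'' Since $\mathfrak{g}$ is a direct summand of $\Z^k$ of rank $k-1$, any $\Z$-basis of $\mathfrak{g}$ gives a group isomorphism $\mathfrak{g} \cong \Z^{k-1}$; in view of the commutation relations \eqref{def1.1}, this identifies $\W[\Z^k]_{\mathfrak{g}}$ with the high rank $W$-algebra $\W[\mathfrak{g}] \cong \W[\Z^{k-1}]$ as Lie algebras. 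Applying Theorem \ref{the513} with $k$ replaced by $k-1$ to the simple cuspidal $\W[\mathfrak{g}]$-module $\mathcal{K}$ then yields $\mathcal{K} \cong \overline{M}(g',h';\mathfrak{g})$ for some $g',h' \in \C$, which is precisely a non-trivial simple intermediate series $\W[\mathfrak{g}]$-module, as required.

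There is no substantive obstacle beyond the prerequisite theorems; the only points that deserve a moment's thought are the canonical identification $\W[\Z^k]_{\mathfrak{g}} \cong \W[\Z^{k-1}]$ (immediate from \eqref{def1.1}) and the boundary case $k=1$, where $\mathfrak{g}=\{0\}$ forces $\mathcal{K}$ to be one-dimensional with $L_0, W_0$ acting by scalars; this case is already covered by Theorem \ref{the44378}, and the resulting modules $P^{\W[\Z]}_{\{0\},\mu,\mathcal{K}}$ are exactly the highest or lowest weight modules appearing there, so the two descriptions are consistent.
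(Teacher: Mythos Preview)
Your proposal is correct and matches the paper's own approach: the paper simply states that Theorem \ref{the622} follows from Theorems \ref{the513} and \ref{the611}, without spelling out the details. Your write-up is in fact more complete than the paper's, since you make explicit the inductive step of applying Theorem \ref{the513} to $\W[\mathfrak{g}]\cong\W[\Z^{k-1}]$ to upgrade ``cuspidal'' to ``intermediate series'' for $\mathcal{K}$, and you note how the $k=1$ case is consistent with Theorem \ref{the44378}.
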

Note that  Theorem \ref{the44378} is a special case of Theorem \ref{the622} for  $k=1$.
\section*{Acknowledgements}
This work was supported by the National Natural Science Foundation of China
(No. 12171129).

\small 
\bigskip

Haibo Chen
\vspace{2pt}

  1. School of  Statistics and Mathematics, Shanghai Lixin University of  Accounting and Finance,   Shanghai
201209, China

\vspace{2pt}
2. Department of Mathematics, Jimei University, Xiamen, Fujian 361021, China

\vspace{2pt}
Hypo1025@163.com


\begin{thebibliography}{9999}\vskip0pt
\parindent=2ex\parskip=-1pt\baselineskip=-1pt
\bibitem{B1}  Y. Billig, Jet modules, {\it Canad. J. Math.}, {\bf 59} (2007), 712-729.



\bibitem{BF1} Y. Billig, V. Futorny, Classification of simple $W_n$-modules with finite-dimensional
weight spaces,   {\it J. Reine Angew. Math.}, {\bf  720} (2016), 199-216.

\bibitem{BF2} Y. Billig, V. Futorny, Classification of simple cuspidal modules for solenoidal Lie
algebras,   {\it Israel J. Math.}, {\bf 222} (2017),  109-123.

\bibitem{BFIK} Y. Billig, V. Futorny, K. Iohara, I. Kashuba,
Classification of simple strong Harish-Chandra $W(m,n)$-modules, arXiv:2006.05618.

\bibitem{BZ} Y. Billig, K. Zhao, Weight modules over exp-polynomial Lie algebras, {\it  J. Pure Appl. Algebra}, {\bf 191} (2004), 23-42.

 \bibitem{BS}   F.A. Berezin, M.A. Shubin, The Schr\"{o}dinger Equation, Kluwer, Dordrecht, 1991.

\bibitem{CLW} Y. Cai, R. L\"{u}, Y. Wang, Classification of simple Harish-Chandra modules for
map (super)algebras related to the Virasoro algebra, {\it J. Algebra}, {\bf 570} (2021), 397-415.



\bibitem{F} V. Futorny, Representations of Affine Lie algebras, Queen's Papers in Pure and
Applied Mathematics, Vol. 106, Queen's University, Kingston, ON, 1997.

\bibitem{G} H. Goldstein, Classical Mechanics, Series in Advances Physics,  Addison-Wesley, Reading,
Massachusetts, 1959.

\bibitem{GLZ0}
X. Guo, X. Liu, K. Zhao, Harish-Chandra modules over the $\mathbb{Q}$ Heisenebrg-Virasoro algebra, {\it J. Aust. Math. Soc.}, {\bf 89} (2010), 9-15.

\bibitem{GLZ} X. Guo, R. L\"{u}, K. Zhao, Classification of simple Harish-Chandra modules
over generalized Virasoro algebras, {\it Proc. Edinb. Math. Soc.}, {\bf 55} (2012), 697-709.

\bibitem{GLZ1} X. Guo, R. L\"{u}, K. Zhao, Simple Harish-Chandra modules, intermediate series
modules, and Verma modules over the loop-Virasoro algebra, {\it  Forum Math.},
{\bf 23} (2011), 1029-1052.







\bibitem{K}
  V.G. Kac, Superconformal algebras and transitive group actions on quadrics, {\it Commun. Math.
Phys.}, {\bf 186}, (1997) 233-252.

\bibitem{KS}
I. Kaplansky, L.J. Santharoubane, Harish-Chandra modules over the Virasoro algebra, {\it Publ. Res.
Inst. Math. Sci.}, {\bf 4}, (1987) 217-231.

\bibitem{LG} G. Liu, X. Guo, Harish-Chandra modules over generalized Heisenberg-Virasoro algebras, {\it Israel J. Math.}, {\bf 204}  (2014),   447-468.

\bibitem{GL} X. Liu, X. Guo,  Weight modules over a class of graded Lie algebras,  {\it Algebr. Represent. Theory.}, {\bf 17} (2014), 1235-1248.

\bibitem{LGZ} D. Liu, S. Gao, L. Zhu, Classification of simple weight modules over $W$-algebra $W(2, 2)$, {\it J.
Math. Phys.},  {\bf 49}  (2008), 113503.

\bibitem{LZ1} R. L\"{u}, K. Zhao, Classification of simple weight modules over the twisted
Heisenberg-Virasoro algebra, {\it  Commun. Contemp. Math.}, {\bf 12}
(2010), 183-205.

\bibitem{LZ2} R. L\"{u}, K. Zhao, Classification of simple weight modules over higher rank
Virasoro algebras, {\it Adv. Math.}, {\bf 201} (2006), 630-656.




\bibitem{M2} O. Mathieu, Classification of Harish-Chandra modules over the Virasoro Lie algebra,
{\it Invent. Math.}, {\bf 107} (1992), 225-234.

\bibitem{Ma2} V. Mazorchuk, Verma modules over generalized Witt algebras,  {\it Compos. Math.}, {\bf 115} (1999), 21-35.

\bibitem{Ma3}  V. Mazorchuk, Classification of simple Harish-Chandra modules over $\mathbb{Q}$-Virasoro
algebra, {\it  Math. Nachr.}, {\bf  209} (2000), 171-177.

\bibitem{MP}
C. Martin, A. Picard, Classificaitons of the indecomposable bounded admissible modules
over the Virasoro Lie algebra with weight spaces of dimension non exceeding two, {\it Commun.
Math. Phys.}, {\bf 150} (1992), 465-493.

\bibitem{MZ} V. Mazorchuk, K. Zhao, Supports of weight modules over Witt algebras, {\it Proc. Roy. Soc. Edinburgh Sect. A}, {\bf 141} (2011),
155-170.


\bibitem{PS} S. Pal, Classification of irreducible Harish-Chandra modules over full toroidal Lie algebras and higher-dimensional Virasoro algebras,
	arXiv:2203.06148.

\bibitem{RU} C. Roger, J. Unterberger, The Schr\"{o}dinger-Virasoro Lie group and algebra: From geometry to representation theory, {\it  Ann.
Henri Poincare,} {\bf 7} (2006), 1477-1529.

\bibitem{S0} Y. Su,   A classification of indecomposable $\mathfrak{sl}_2$($\C$)-modules and a conjecture of Kac on irresucible modules over the Virasoro algebra, {\it J. Algebra}, {\bf 161} (1993), 33-46.

\bibitem{S1} Y. Su, Harish-Chandra modules of the intermediate series over the high rank
Virasoro algebras and high rank super-Virasoro algebras, {\it J. Math. Phys.}, {\bf 35} (1994),  2013-2023.

\bibitem{S2} Y. Su,   Simple modules over the high rank Virasoro algebras, {\it Comm. Algebra}, {\bf 29} (2001), 2067-2080.

\bibitem{S} Y. Su, Classification of Harish-Chandra modules over the higher rank Virasoro algebras, {\it Comm. Math. Phys.}, {\bf 240} (2003), 539-551.

\bibitem{SZ} Y. Su,   K. Zhao, Generalized Virasoro and super-Virasoro algebras and modules of
intermediate series,   {\it J. Algebra}, {\bf 252} (2002), 1-19.

\bibitem{XL}
Y. Xue, R. L\"{u}, Simple weight modules with finite-dimensional weight spaces over Witt superalgebras, {\it J. Algebra}, {\bf 74}
(2021), 92-116.
\bibitem{ZD}
W. Zhang, C. Dong, $W$-algebra $W(2,2)$ and the vertex operator algebra $L(\frac{1}{2},0)\otimes L(\frac{1}{2},0)$, {\it Commun.
Math. Phys.},  {\bf 285} (2009), 991-1004.
\end{thebibliography}
\end{document}